\documentclass[runningheads,a4paper]{llncs}

\usepackage{amssymb, amsmath}
\setcounter{tocdepth}{3}
\usepackage{graphicx}

\usepackage[utf8]{inputenc}
\usepackage{hyperref}
\usepackage{dsfont}
\usepackage{cancel}
\usepackage{algorithm2e}
\usepackage{bm}
\usepackage{enumitem}
\usepackage{tikz}
\usepackage{ifthen}

\usetikzlibrary{patterns}

\newcommand{\Version}{long}

\makeatletter
\renewcommand{\@biblabel}[1]{[#1]\hfill}
\makeatother

\newcommand{\indic}[1]{\mathds{1}_{\left\{ #1\right\}}}
\renewcommand{\P}{\mathds{P}}
\newcommand{\E}{\mathds{E}}
\renewcommand{\indent}{\hspace*{0.5cm}}

\newcommand{\R}{\mathbb{R}}
\renewcommand{\epsilon}{\varepsilon}
\renewcommand{\b}{\bm}

\newcommand{\diam}{\mathop{\mathrm{diam}}}
\newcommand{\node}{\mathrm{node}}
\renewcommand{\hat}{\widehat}

\usepackage{color}

\setitemize{noitemsep, topsep=0pt, parsep=0pt, partopsep=0pt, leftmargin=2em}

\newcommand{\ourtitle}{A consistent deterministic regression tree for non-parametric prediction of time series}
\title{\ourtitle}

\begin{document}
\titlerunning{A deterministic regression tree for non-parametric prediction of time series}

\author{Pierre Gaillard\inst{1}\inst{2}
\and Paul Baudin\inst{3}
}
\authorrunning{}

\institute{
EDF R\&D, Clamart, France \\
\and GREGHEC (HEC Paris, CNRS), Jouy-en-Josas, France \\
\email{pierre-p.gaillard@edf.fr}
\and Inria, Roquencourt, France \\
\email{paul.baudin@inria.fr} 
}

\maketitle

\begin{abstract}
We study online prediction of bounded stationary ergodic processes. To do so, we consider the setting of prediction of individual sequences and build a deterministic regression tree that performs asymptotically as well as the best $L$-Lipschitz constant predictors. Then, we show why the obtained regret bound entails the asymptotical optimality with respect to the class of bounded stationary ergodic processes.
\end{abstract}

\section{Introduction}

We suppose that at each time step $t=1,2,\dots$, the learner is asked to form a prediction $\hat Y_t$ of the next outcome $Y_t \in [0,1]$ of a bounded stationary ergodic process $(Y_t)_{t=-\infty,\dots,\infty}$ with knowledge of the past observations $Y_1,\dots,Y_{t-1}$. To evaluate the performance, a convex and $M$-lipschitz loss function $\ell: [0,1]^2 \to [0,1]$ is considered. The following fundamental limit has been proven by \cite{Algoet1994}. For any prediction strategy, almost surely
\begin{equation}
	\liminf_{T \to \infty} \Biggl(  \frac{1}{T}  \sum_{t=1}^T \ell(\hat Y_t, Y_t) \Biggr) \geq L^\star \,, \mbox{ where }
	L^\star = \E \! \left[ \inf_{f \in \mathcal{B}^{\infty}\!} \E \Bigl[ \ell\bigl(f(Y_{-\infty}^{-1}), Y_0 \bigr) \big| Y_{-\infty}^{-1}\Bigr]\right]
	\label{eq:Lstar} 
\end{equation}
is the expected minimal loss over all possible Borel estimations of the outcome $Y_0$ based on the infinite past ($\mathcal{B}^{\infty}$ denotes the set of Borel functions from $[0,1]^\infty$ to $[0,1]$).
One may thus try to design \emph{consistent} strategies that achieve the lower bound, that is, $\limsup_T \bigl\{ (1/T)\sum_t \ell(\hat Y_t,Y_t)\bigr\} \leq L^\star$.

\medskip
\noindent
{\bf Litterature review.} Many forecasting strategies have been designed to this purpose. The vast majority of these strategies are based on statistical techniques used for time-series prediction, going from parametric models like autoregressive models (see \cite{BrockwellDavis1991}) to non-parametric methods (see the reviews of \cite{GyoerfiHaerdleSardaVieu1989,Bosq1996,MerhavFeder1998}). 
In recent years, another collection of algorithms resolving related problems have been designed in \cite{GyoerfiLugosiFargas2001,GyorfiOttucsak2007,BiauBleakleyGyorfiOttucsak2010,BiauPatra2011}. 
At their cores, all these algorithms use some machine learning non-parametric prediction scheme (like histogram, kernel, or nearest neighbor estimation) with parameters by given both a window, and the length of the past to consider. Then, they output predictions by mixing the countably infinite set of experts corresponding to strategies with fixed values of these two parameters. 

\smallskip
\noindent
{\bf Our approach.} We adopt the point of view of individual sequences, see the monograph of \cite{Cesa-BianchiLugosi2006}. In the process, we divide into two separate layers the setting of stochastic time series and the one of individual sequences.
Our main result is Theorem~\ref{th:Lstar} and it states that any strategy that satisfies some deterministic regret bound is consistent. Section~\ref{sec:IndividualSequences} and~\ref{sec:Autoregressive} design such a strategy and consider the following framework of sequential prediction of individual sequences. We suppose that a sequence $(\b x_t,y_t) \in \mathcal{X} \times \mathcal{Y}$ is observed step by step, where $\mathcal{X} \subset [0,1]^d$ is the covariable space and $\mathcal{Y}\subset [0,1]$ a convex observation space (in Section~\ref{sec:Autoregressive}, $\b x_t$ will be replaced by $y_{t-d}^{t-1} = y_{t-d},\dots,y_{t-1}$, then, $y_t$ will be replaced by $Y_t$  in Section~\ref{sec:Lstar}).
The learner is asked at each time step $t$ to predict the next observation $y_t$ with knowledge of the past observations $y_1,\dots,y_{t-1}$ and of the past and present exogenous variables $\b x_1,\dots, \b x_{t}$.  The goal of the forecaster is to minimize its cumulative regret against the class $\mathcal{L}_L^d$ of $L$-Lipschitz functions from $[0,1]^d$ to $[0,1]$, \vspace*{-0.5em}
\[
	\hat R_{L,T} = 	\sum_{t=1}^T \ell(\hat y_t, y_t) - \inf_{f \in \mathcal{L}_L^d} \sum_{t=1}^T \ell\bigl(f (\b x_t), y_t\bigr) \,,  \vspace*{-0.5em}
\]
that is, to ensure $\hat R_{L,T} = o(T)$. 
In Section~\ref{sec:IndividualSequences}, we describe the nested EG strategy (Algorithm~\ref{alg:NestedEG}), which follows the spirit of binary regression trees like Cart (see \cite{BreimanFriedmanOlshenStone1984}). We provide in Theorem~\ref{th:regretbound} a finite-time regret bound with respect to the class of $L$-Lipschitz functions. 
We recall below the considered setting.

\smallskip \noindent
At each time step $t = 1, \dots, T$, \\
\indent 1. Forecaster observes $\b x_t \in \mathcal{X} \subset [0,1]^d$ \\
\indent 2. Forecaster predicts   $\hat y_t \in [0,1] $ \\
\indent 3. Environment chooses $y_t \in \mathcal{Y}$ \\
\indent 4. Forecaster suffers loss $\hat \ell_t = \ell(\hat y_t, y_t) \in [0,1]$.

\smallskip
\noindent
{\bf Contributions.} First,  we clean up the standard analysis of prediction of ergodic processes by carrying out the aforementioned separation in two layers.
The second advantage is the computational efficiency as we will discuss later in remarks. 
A third benefit of our approach is to be valid for a general class of loss functions when previous papers to our knowledge only treat particular cases like the square loss or the pinball loss.

\section{The nested EG strategy}
\label{sec:IndividualSequences}

The nested EG strategy (Algorithm~\ref{alg:NestedEG}) incrementally builds an estimate of the best Lipschitz function $f^{\star}$. The core idea is to estimate $f^{\star}$ precisely in areas of the covariable space $\mathcal{X}$ with many occurrences of covariables $\b x_t$, while estimating it loosely in other parts of the space. To implement this idea, Algorithm~\ref{alg:NestedEG}  maintains a deterministic binary tree whose nodes are associated with regions of the covariable space, such that the regions with nodes deeper in the tree (further away from the root) represent increasingly smaller subsets of $\mathcal{X}$ (see Figure~\ref{fig:tree}). 

In the later, we assume for simplicity that $\mathcal{X} = [0,1]^d$ and $\mathcal{Y} = [0,1]$ and that the loss function $\ell$ is from $[0,1]^2$ to $[0,1]$. 
The case of unknown bounded sets $\mathcal{X} \subset \R^d$ and $\mathcal{Y} \subset \R$ will be treated later in remarks.

\subsection{The best constant oracle}

If the number of observations such that $\b x_t$ belong to a subset $\mathcal{X}^{\node} \subset \mathcal{X}$ is small enough, one does not need to estimate $f^{\star}$ precisely over $\mathcal{X}^{\node}$.  Lemma~\ref{lem:approxlipschitz} formalizes this idea by controlling the approximation error suffered by approximating $f^{\star}$ by the best constant in $[0,1]$. The control is expressed in terms of the number of observations $T^{\node}$ and of the size of the set $\mathcal{X}^{\node}$, which is measured by its diameter defined as
	$
		\diam \!\big( \mathcal{X}^{\node} \big)= \max_{\b x, \b x' \in \mathcal{X}^{\node}} \left\| \b x - \b x'\right\|_2 \,.
	$

\begin{lemma}[Approximation of  $f^{\star}$ by a constant] Let $T^{\node} \geq 1$ and suppose that $\ell$ is $M$-Lipschitz in its first argument. Then, 
\vspace*{-0.5em}
	\[
	\inf_{y \in [0,1]} 	\sum_{t=1}^{T^{\node}} \ell(y, y_t) \leq  \inf_{f \in \mathcal{L}_L^d} \sum_{t=1}^{T^{\node}} \ell\bigl(f (\b x_t), y_t\bigr) +  M  L T^{\node} \diam\!\big(\mathcal{X}^{\node}\big),
	\]
	where $\mathcal{X}^{\node} \subset [0,1]^d$ is such that $\b x_t \in \mathcal{X}^{\node}$ for all $t=1,\dots,T^{\node}$.
	\label{lem:approxlipschitz}
\end{lemma}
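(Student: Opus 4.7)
The strategy is to pick any $L$-Lipschitz function $f \in \mathcal{L}_L^d$ and compare $\sum_t \ell(f(\b x_t),y_t)$ to the constant predictor obtained by evaluating $f$ at one fixed reference point of $\mathcal{X}^{\node}$. Since all $\b x_t$ lie within $\diam(\mathcal{X}^{\node})$ of that reference point, the $L$-Lipschitz property of $f$ keeps the values $f(\b x_t)$ close to the chosen constant, and the $M$-Lipschitz property of $\ell$ in its first argument then lifts this to a closeness bound on the losses.

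Concretely, I would fix an arbitrary $\b x_0 \in \mathcal{X}^{\node}$ (for instance $\b x_0 = \b x_1$) and set $c = f(\b x_0) \in [0,1]$. For each $t \in \{1,\dots,T^{\node}\}$, the Lipschitz assumptions give
\[
\ell(c, y_t) - \ell(f(\b x_t), y_t) \leq M \bigl| f(\b x_0) - f(\b x_t) \bigr| \leq M L \, \| \b x_0 - \b x_t \|_2 \leq M L \diam\!\big(\mathcal{X}^{\node}\big),
\]
where the last inequality uses $\b x_0, \b x_t \in \mathcal{X}^{\node}$. Summing over $t=1,\dots,T^{\node}$ yields
\[
\sum_{t=1}^{T^{\node}} \ell(c, y_t) \leq \sum_{t=1}^{T^{\node}} \ell(f(\b x_t), y_t) + M L T^{\node} \diam\!\big(\mathcal{X}^{\node}\big).
\]
Since $c \in [0,1]$, the left-hand side is bounded below by $\inf_{y \in [0,1]} \sum_{t=1}^{T^{\node}} \ell(y, y_t)$. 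Taking the infimum over $f \in \mathcal{L}_L^d$ on the right-hand side then gives the claim.

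\textbf{Main obstacle.} There is essentially no obstacle: the proof is a direct chaining of the two Lipschitz properties together with the definition of the diameter. The only mildly delicate point is to make sure the constant $c = f(\b x_0)$ genuinely lies in $[0,1]$ so that it is admissible in the infimum on the left, which is automatic because $f$ maps into $[0,1]$. No regularity or measurability issue arises because the inequality is established pointwise in $f$ before taking the infimum.
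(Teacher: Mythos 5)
Your proof is correct and follows essentially the same route as the paper's: both fix the reference point $\b x_1$, use the $M$-Lipschitz property of $\ell$ and the $L$-Lipschitz property of $f$ to bound $\ell(f(\b x_1),y_t)-\ell(f(\b x_t),y_t)$ by $ML\diam(\mathcal{X}^{\node})$, sum over $t$, and pass to the two infima. No discrepancy to report.
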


\begin{proof}Let $t \geq 1$. Using that $\ell$ is $M$-Lipschitz and $f$ is L-Lipschitz, we get \vspace*{-0.3em}
\begin{equation*}
	 \ell\big(f(\b x_1), y_t\big) - \ell\bigl(f (\b x_t), y_t\bigr)
	 	 \leq M \big|f(\b x_1)-f(\b x_t)\big|
	 	 \leq M L \big\|\b x_1-\b x_t\big\|_2 \leq  M L \delta \,.
\end{equation*} \vspace*{-1.3em}

\noindent 
Summing over $t$ and noting that $\inf_y \sum_t \ell(y,y_t) \leq \sum_t \ell \big(f(\b x_1),y_t\big)$ concludes.\!\qed \end{proof}

\subsection{Performing as well as the best constant: the EG strategy}
\label{sec:EG}

\begin{algorithm}[t]
\begin{minipage}{0.9\textwidth}
\rule{\linewidth}{.5pt}
\vspace*{-0.5em}
\caption{\label{alg:EG} The gradient-based exponentially weighted average forecaster (EG) with two constant experts that predict respectively 0 and 1.}
\textbf{Parameter:} $M >0$ 

\smallskip
\textbf{For} time step $t =1,2,\dots$
	\begin{itemize}
		\item[1.] Define the learning parameter $\eta_t = M^{-1}\sqrt{(\log 2 )/t}$
		\item[2.] Predict
			\[
				\hat y_t = \frac{\exp \! \left(-\eta_t \sum_{s=1}^{t-1} \ell'(\hat y_s,y_s) \right)}{1 + \exp \! \left(-\eta_t \sum_{s=1}^{t-1} \ell'(\hat y_s,y_s) \right)} \in [0,1]\,,
			\]
			where $\ell'$ denotes the (sub)gradient of $\ell$ with respect to its first argument
		\item[3.] Observe $y_t$ 
	\end{itemize}
\rule{\linewidth}{.5pt} \vspace{.125cm}
\end{minipage}
\end{algorithm}

Lemma~\ref{lem:approxlipschitz} implies that considering constant predictions is not bad when either the covariable region is small, or the number of observations is small. The next step consists thus of estimating online the best constant prediction in~$[0,1]$.

To do so, among many existing methods, 
we consider the well-known  \emph{gradient-based exponentially weighted average forecaster} (EG), introduced by~\cite{KivinenWarmuth1997}. In the setting of prediction of individual sequences with expert advice---see the monograph by~\cite{Cesa-BianchiLugosi2006}, EG competes with the best fixed convex combination of experts. In the case where two experts predict constant predictions respectively $0$ and $1$ at all time steps, EG ensures vanishing average regret with respect to any constant prediction in $[0,1]$. We describe in Algorithm~\ref{alg:EG} this particular case of EG  and we provide the associated regret bound in Lemma~\ref{lem:EG}, whose proof follows from the standard proof of EG, available for instance in~\cite{Cesa-BianchiLugosi2006}.

\begin{lemma}[EG] 
\label{lem:EG}
Let $T^{\node} \geq 1$. We assume that the loss function $\ell$ is convex and $M$-Lipschitz in its first argument. Then, the cumulative loss of Algorithm~\ref{alg:EG} is upper bounded as follows: \vspace*{-0.3em}
\[
	\sum_{t=1}^{T^{\node}} \ell(\hat y_t,y_t) \leq \inf_{y \in [0,1]} \sum_{t=1}^{T^{\node}} \ell(y,y_t) +  2 M \sqrt{T^{\node}\log 2} \,.
\] 
\end{lemma}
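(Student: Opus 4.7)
The plan is to reduce the bound to the textbook analysis of the exponentially weighted average (EWA) forecaster. The proof goes through three steps: linearization by convexity, identification of the EG prediction with an EWA on two constant experts, and invocation of a standard regret bound for EWA with time-varying learning rate.

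First I would linearize the loss. Since $\ell(\cdot,y_t)$ is convex and $\ell'(\hat y_t,y_t)$ is a subgradient, for every $y \in [0,1]$,
\[
    \ell(\hat y_t,y_t) - \ell(y,y_t) \leq \ell'(\hat y_t,y_t)\,(\hat y_t - y).
\]
Summing over $t=1,\dots,T^{\node}$, it suffices to bound the \emph{linear} regret $\sum_t g_t(\hat y_t - y)$ for every $y\in[0,1]$, where I write $g_t = \ell'(\hat y_t,y_t)$; note that $|g_t|\leq M$ since $\ell$ is $M$-Lipschitz in its first argument.

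Second, I would observe that the formula defining $\hat y_t$ in Algorithm~\ref{alg:EG} is exactly the EWA prediction for the two-expert problem whose experts output the constants $1$ and $0$ respectively and suffer linear losses $g_s \cdot 1 = g_s$ and $g_s \cdot 0 = 0$. Indeed, their cumulative losses through round $t-1$ are $\sum_{s<t} g_s$ and $0$, so the EWA weights are proportional to $\exp(-\eta_t \sum_{s<t} g_s)$ and $1$, and the EWA prediction is $\hat y_t \cdot 1 + (1-\hat y_t) \cdot 0 = \hat y_t$. Because $y \mapsto g_t\,y$ is linear on $[0,1]$, the infimum of the linear comparator over $[0,1]$ coincides with the minimum over the two corner experts, so bounding the two-expert EWA regret bounds $\sum_t g_t(\hat y_t - y)$ for every $y\in[0,1]$.

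Third, I would invoke the standard regret bound for the EWA forecaster with the time-varying learning rate $\eta_t = M^{-1}\sqrt{(\log 2)/t}$ of Algorithm~\ref{alg:EG}. Since the two expert losses lie in an interval of length at most $2M$ (as $|g_t|\le M$), the potential-function argument detailed in \cite{Cesa-BianchiLugosi2006} yields a regret of at most $2M\sqrt{T^{\node}\log 2}$; the usual square-root factor comes out because $\sum_{t=1}^{T^{\node}} 1/\sqrt{t} \leq 2\sqrt{T^{\node}}$. Combining this with the linearization step delivers the stated bound.

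There is no genuine obstacle beyond correctly identifying the reduction: once Algorithm~\ref{alg:EG} is recognized as two-expert EWA applied to the linearized losses, the conclusion is a black-box application of the Cesa-Bianchi–Lugosi analysis, which is why the statement says the proof follows from standard arguments. The only mild calculation is tracking the constants so that the range $2M$ of the linearized losses combines with the $\sqrt{\log 2}$ factor into the advertised $2M\sqrt{T^{\node}\log 2}$.
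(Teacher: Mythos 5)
Your proposal is correct and matches the paper's intent: the paper gives no explicit proof and simply defers to the standard EG/EWA analysis of Cesa-Bianchi and Lugosi, which is exactly what you reconstruct (linearization via the subgradient, reduction to two-expert EWA on the linearized losses with range $2M$, and the time-varying learning rate bound). The only point to be careful about is that the exact constant $2M\sqrt{T^{\node}\log 2}$ depends on which version of the time-varying-$\eta$ potential argument you invoke, but this is a bookkeeping matter, not a gap.
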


\noindent
{\bf Unknown value of $M$.}
Note that Algorithm~\ref{alg:EG} needs to know in advance a uniform bound $M$ on $\ell'$. This is the case, if one considers as we do a bounded observation space $[0,1]$ with the absolute loss function, defined for all $y,y'\in [0,1]$ by $\ell(y',y) = |y-y'|$; the pinball loss, defined by $\ell_\alpha(y',y) = (\alpha - \indic{y \geq x})(y -y')$; or the square loss, defined by $\ell(y',y) = (y-y')^2$. However, in the case of an unknown observation space $\mathcal{Y}$ the bound on the gradient of the square loss is unknown and needs to be calibrated online at the small cost of the additional term $2M(2+4(\log2)/3)$ in the regret bound, see \cite{RooijErvenGrunwaldKoolen2014}.

\begin{center}
\begin{figure}[b]
\begin{center}
\begin{tikzpicture}[xscale=0.42,yscale=0.42]

\tikzstyle{arrow}=[>=latex,thick]
\tikzstyle{nodebelow}=[rectangle,draw]
\tikzstyle{recEmpty}=[fill=white]
\tikzstyle{recFilled}=[fill=gray!50!]

\def\DistanceInterLevels{3}
\def\DistanceInterLeaves{5.5}
\def\XRectangleScalingFactor{1}
\def\YRectangleScalingFactor{1}

\def\LevelA{(-0)*\DistanceInterLevels}
\def\LevelB{(-1)*\DistanceInterLevels}
\def\LevelC{(-2)*\DistanceInterLevels}
\def\InterLeaves{(1)*\DistanceInterLeaves}

\def\XWest{(-1)*\XRectangleScalingFactor}
\def\XCenter{(0)*\XRectangleScalingFactor}
\def\XEast{(1)*\XRectangleScalingFactor}
\def\YSouth{(-1)*\YRectangleScalingFactor}
\def\YCenter{(0)*\YRectangleScalingFactor}
\def\YNorth{(1)*\YRectangleScalingFactor}

\def\XLabelCoordinate{1.2}
\def\YLabelCoordinate{0}

\node[nodebelow] (R) at ({(1.5)*\InterLeaves},{\LevelA}) {};
\node (R_label) at ({(1.65)*\InterLeaves+\XLabelCoordinate},{\LevelA+\YLabelCoordinate }) {$(0,1)$};
\node (R1) at ({(1.5)*\InterLeaves+\XWest},{\LevelA+\YSouth}) {};
\node (R2) at ({(1.5)*\InterLeaves+\XEast},{\LevelA+\YNorth}) {};

\node[nodebelow] (Ra) at ({(0.5)*\InterLeaves},{\LevelB}) {};
\node (Ra_label) at ({(0.65)*\InterLeaves+\XLabelCoordinate},{\LevelB+\YLabelCoordinate }) {$(1,1)$};
\node (Ra1) at ({(0.5)*\InterLeaves+\XWest},{\LevelB+\YSouth}) {};
\node (Ra2) at ({(0.5)*\InterLeaves+\XEast},{\LevelB+\YNorth}) {};
\node (Ra3) at ({(0.5)*\InterLeaves+\XCenter},{\LevelB+\YNorth}) {};

\node[nodebelow] (Raa) at ({(0)*\InterLeaves},{\LevelC}) {};
\node (Raa_label) at ({(0.15)*\InterLeaves+\XLabelCoordinate},{\LevelC+\YLabelCoordinate }) {$(2,1)$};
\node (Raa1) at ({(0)*\InterLeaves+\XWest},{\LevelC+\YSouth}) {};
\node (Raa2) at ({(0)*\InterLeaves+\XEast},{\LevelC+\YNorth}) {};
\node (Raa3) at ({(0)*\InterLeaves+\XCenter},{\LevelC+\YNorth}) {};
\node (Raa4) at ({(0)*\InterLeaves+\XCenter},{\LevelC+\YCenter}) {};

\node[nodebelow] (Rab) at ({(1)*\InterLeaves},{\LevelC}) {};
\node (Rab_label) at ({(1.15)*\InterLeaves+\XLabelCoordinate},{\LevelC+\YLabelCoordinate }) {$(2,2)$};
\node (Rab1) at ({(1)*\InterLeaves+\XWest},{\LevelC+\YSouth}) {};
\node (Rab2) at ({(1)*\InterLeaves+\XEast},{\LevelC+\YNorth}) {};
\node (Rab3) at ({(1)*\InterLeaves+\XCenter},{\LevelC+\YNorth}) {};
\node (Rab4) at ({(1)*\InterLeaves+\XWest},{\LevelC+\YCenter}) {};

\node[nodebelow] (Rb) at ({(2.5)*\InterLeaves},{\LevelB}) {};
\node (Ra_label) at ({(2.65)*\InterLeaves+\XLabelCoordinate},{\LevelB+\YLabelCoordinate }) {$(1,2)$};
\node (Rb1) at ({(2.5)*\InterLeaves+\XWest},{\LevelB+\YSouth}) {};
\node (Rb2) at ({(2.5)*\InterLeaves+\XEast},{\LevelB+\YNorth}) {};
\node (Rb3) at ({(2.5)*\InterLeaves+\XCenter},{\LevelB+\YSouth}) {};

\node[nodebelow] (Rba) at ({(2)*\InterLeaves},{\LevelC}) {};
\node (Rba_label) at ({(2.15)*\InterLeaves+\XLabelCoordinate},{\LevelC+\YLabelCoordinate }) {$(2,3)$};
\node (Rba1) at ({(2)*\InterLeaves+\XWest},{\LevelC+\YSouth}) {};
\node (Rba2) at ({(2)*\InterLeaves+\XEast},{\LevelC+\YNorth}) {};
\node (Rba3) at ({(2)*\InterLeaves+\XCenter},{\LevelC+\YSouth}) {};
\node (Rba4) at ({(2)*\InterLeaves+\XEast},{\LevelC+\YCenter}) {};

\node[nodebelow] (Rbb) at ({(3)*\InterLeaves},{\LevelC}) {};
\node (Rbb_label) at ({(3.15)*\InterLeaves+\XLabelCoordinate},{\LevelC+\YLabelCoordinate }) {$(2,4)$};
\node (Rbb1) at ({(3)*\InterLeaves+\XWest},{\LevelC+\YSouth}) {};
\node (Rbb2) at ({(3)*\InterLeaves+\XEast},{\LevelC+\YNorth}) {};
\node (Rbb3) at ({(3)*\InterLeaves+\XCenter},{\LevelC+\YSouth}) {};
\node (Rbb4) at ({(3)*\InterLeaves+\XCenter},{\LevelC+\YCenter}) {};

\draw[arrow] (R)--(Ra);
\draw[arrow] (Ra)--(Raa);
\draw[arrow] (Ra)--(Rab);
\draw[arrow] (R)--(Rb);
\draw[arrow] (Rb)--(Rba);
\draw[arrow] (Rb)--(Rbb);

\draw[recEmpty] (R1) rectangle (R2);
\draw[recFilled] (R1) rectangle (R2);

\draw[recEmpty] (Ra1) rectangle (Ra2);
\draw[recFilled] (Ra1) rectangle (Ra3);

\draw[recEmpty] (Rb1) rectangle (Rb2);
\draw[recFilled] (Rb3) rectangle (Rb2);

\draw[recEmpty] (Raa1) rectangle (Raa2);
\draw[recEmpty] (Raa1) rectangle (Raa3);
\draw[recFilled] (Raa1) rectangle (Raa4);

\draw[recEmpty] (Rab1) rectangle (Rab2);
\draw[recEmpty] (Rab1) rectangle (Rab3);
\draw[recFilled] (Rab4) rectangle (Rab3);

\draw[recEmpty] (Rba1) rectangle (Rba2);
\draw[recEmpty] (Rba3) rectangle (Rba2);
\draw[recFilled] (Rba3) rectangle (Rba4);

\draw[recEmpty] (Rbb1) rectangle (Rbb2);
\draw[recEmpty] (Rbb3) rectangle (Rbb2);
\draw[recFilled] (Rbb4) rectangle (Rbb2);

\end{tikzpicture}
\caption{Representation of the binary tree in dimension $d=2$.}
\label{fig:tree}
\end{center}
\end{figure}
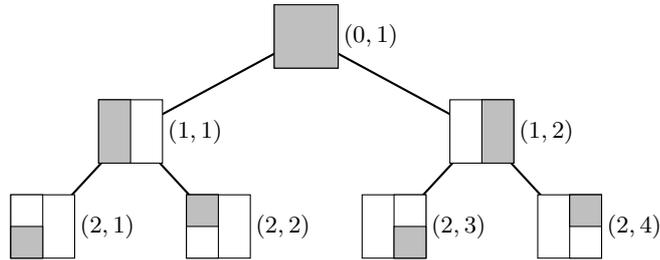
\end{center}

\subsection{The nested EG strategy}
\label{sec:NestedEG}

The nested EG strategy presented in Algorithm~\ref{alg:NestedEG} implements the idea of Lemma~\ref{lem:approxlipschitz} and Lemma~\ref{lem:EG}. It maintains a binary tree whose nodes are associated with regions of the covariable space $[0,1]^d$. The nodes in the tree are indexed by pairs of integers $(h,i)$; where the first index $h \geq 0$ denotes the distance of the node to the root (also referred to as the depth of the node) and the second index $i$ belongs to $\{1,\dots,2^h\}$. The root is thus denoted by $(0,1)$. By convention, $(h+1,2i-1)$ and $(h+1,2i)$ are used to refer to the two children of node $(h,i)$. Let $\mathcal{X}^{(h,i)}$ be the region associated with node $(h,i)$. By assumption, these regions are hyper-rectangle and must satisfy the constraints
\[
	\mathcal{X}^{(0,1)} = [0,1]^d \qquad \mbox{ and } \qquad \mathcal{X}^{(h,i)} = \mathcal{X}^{(h+1,2i-1)} \sqcup \mathcal{X}^{(h+1,2i)} \,,
\]
where $\sqcup$ denotes the disjoint union.
The set of regions associated with terminal nodes (or leaves) forms thus a partition of $[0,1]^d$.

At time step $t$, when a new covariable $\b x_t$ is observed, Algorithm~\ref{alg:NestedEG} first selects the associated leaf $(h_t,i_t)$ such that $\b x_t \in \mathcal{X}^{(h_t,i_t)}$ (step 2). The leaf $(h_t,i_t)$ then predicts the next observation $y_t$ by updating a local version $\mathcal{E}^{(h_t,i_t)}$ of Algorithm~\ref{alg:EG} (step 3). Namely, $\mathcal{E}^{(h_t,i_t)}$ runs Algorithm~\ref{alg:EG} on the sub-sequence of observations $(\b x_s, y_s)$ such that the associated leaf is $(h_t,i_t)$, that is, $(h_s,i_s) = (h_t,i_t)$. When the number of observations $T^{(h_t,i_t)}$ received and predicted by leaf $(h_t,i_t)$ becomes too large compared to the size of the region $\mathcal{X}^{(h_t,i_t)}$  (step 6), the tree is updated. To do so, the region $\mathcal{X}^{(h_t,i_t)}$ is divided in two sub-regions of equal volume by cutting along one given coordinate. 

The coordinate $r_t + 1$ to be split is chosen in a deterministic order, where $r_t = (h_t \mod d)$ and  $\mod$ denotes the modulo operation. Thus, at the root node $(0,1)$ the first coordinate is split, then by going down in the tree we split the second one, then the third one and so on until we reach the depth $d$, in which case we split the first coordinate for the second time.  
Each sub-region is associated with a child of node $(h_t,i_t)$. Consequently, $(h_t,i_t)$ becomes an inner node and is thus no longer used to form predictions. 

To facilitate the formal study of the algorithm, we will need some additional notation. In particular, we will introduce time-indexed versions of several quantities. $\mathcal{T}_t$ denotes the tree stored by Algorithm~\ref{alg:NestedEG} at the beginning of time step $t$. The initial tree is thus the root $\mathcal{T}_0 = \left\{(0,1)\right\}$ and it is expanded when the splitting condition (step 6) holds, as \vspace*{-0.3em}
\[
	\mathcal{T}_{t+1} =  \mathcal{T}_t \cup \bigl\{ (h_t+1,2i_t-1),(h_t+1,2i_t)\bigr\} \,
\]
(step 6.3) and remains unchanged otherwise. We denote by $N_t$ the number of nodes of $\mathcal{T}_t$ and by $H_t$ the height of $\mathcal{T}_t$, that is, the maximal depth of the leaves of $\mathcal{T}_t$. A performance bound for Algorithm~\ref{alg:NestedEG} is provided below.

\begin{theorem}
\label{th:regretbound}
Let $T \geq 1$ and $d \geq 1$. Then, the cumulative regret $\hat R_{L,T}$ of Algorithm~\ref{alg:NestedEG} is upper bounded as \vspace*{-0.3em}
\begin{align*}
		\sum_{t=1}^T \ell(\hat y_t, y_t) - \inf_{f \in \mathcal{L}_L^d} \sum_{t=1}^T \ell\bigl(f (\b x_t), y_t\bigr)
		& \leq   M  \left(3+ L \right) \sqrt{N_T T}  \\
		& \leq  M(3+L) \Bigl(\sqrt{T} + 2(3d)^{\frac{d}{2(d+2)}}T^{\frac{d+1}{d+2}}\Bigr) \,.
\end{align*}
\end{theorem}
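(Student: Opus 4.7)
\medskip
\noindent
\textbf{Proof plan.} My plan is to decompose the cumulative regret along the dynamic leaves that actually produced each prediction, and then to apply Lemmas~\ref{lem:EG} and~\ref{lem:approxlipschitz} leafwise. For each $(h,i)\in\mathcal{T}_T$, set $T^{(h,i)} = |\{t\leq T:(h_t,i_t)=(h,i)\}|$, which counts observations routed to $(h,i)$ \emph{while it was a leaf}; since the tree grows monotonically and every time step is routed to exactly one active leaf, $\sum_{(h,i)\in\mathcal{T}_T}T^{(h,i)}=T$. Because any $f\in\mathcal{L}_L^d$ can be reused inside each per-leaf infimum, the global infimum of $\sum_t\ell(f(\b x_t),y_t)$ is at least the sum over $(h,i)$ of the corresponding infima restricted to $\{t:(h_t,i_t)=(h,i)\}$. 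Applying Lemma~\ref{lem:EG} to the local instance $\mathcal{E}^{(h,i)}$ and Lemma~\ref{lem:approxlipschitz} with $\mathcal{X}^{\node}=\mathcal{X}^{(h,i)}$ then gives
\[
\hat R_{L,T} \ \leq\ \sum_{(h,i)\in\mathcal{T}_T}\Bigl(2M\sqrt{T^{(h,i)}\log 2}\ +\ M L\,T^{(h,i)}\,\diam(\mathcal{X}^{(h,i)})\Bigr).
\]

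\medskip
\noindent
The splitting rule of step~6 is precisely calibrated so that, up to the single ``overshoot'' observation that triggers a split, $T^{(h,i)}\diam(\mathcal{X}^{(h,i)})^2$ remains below a fixed constant for every $(h,i)\in\mathcal{T}_T$; consequently $T^{(h,i)}\diam(\mathcal{X}^{(h,i)}) \leq \sqrt{T^{(h,i)}}$ with a slack that pushes the EG constant $2\sqrt{\log 2}$ up to the announced $3$. Cauchy--Schwarz applied to $\sum_{(h,i)\in\mathcal{T}_T}\sqrt{T^{(h,i)}}\leq \sqrt{|\mathcal{T}_T|\cdot T}=\sqrt{N_T T}$ then delivers the first bound $M(3+L)\sqrt{N_T T}$.

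\medskip
\noindent
For the explicit second bound I would control $N_T$ combinatorially. Because Algorithm~\ref{alg:NestedEG} cycles through the $d$ coordinates when descending the tree, every node at depth $h$ lies in a hyper-rectangle of diameter of order $\sqrt{d}\,2^{-h/d}$, so the splitting threshold forces it to accumulate at least of order $2^{2h/d}/d$ observations before being split. Summing this lower bound over the split nodes against the global budget $T$, using the trivial $N_h\leq 2^h$ at each depth, and optimizing over the tree's height $H_T$, yields a bound of the form $N_T\leq 4(3d)^{d/(d+2)}T^{d/(d+2)}$, which inserted into $M(3+L)\sqrt{N_T T}$ produces the announced second bound.

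\medskip
\noindent
\textbf{Main obstacle.} The first two steps are essentially bookkeeping: the leafwise decomposition, the application of Lemmas~\ref{lem:EG} and~\ref{lem:approxlipschitz}, and the Cauchy--Schwarz step are straightforward once one notices that the same $f$ may be used inside every per-leaf infimum. The genuinely delicate part is the combinatorial control of $N_T$: one must carefully match the cyclic coordinate-splitting scheme to the depth-indexed diameter of a node, then trade off the number of nodes at each depth against the minimum observation counts required by the splitting rule, so that the optimized exponent $d/(d+2)$ and the correct $d$-dependent constant $(3d)^{d/(2(d+2))}$ emerge cleanly. A minor secondary issue is pinning down the constant~$3$ in the first bound, which amounts to careful handling of the single observation that crosses each splitting threshold before the split actually occurs.
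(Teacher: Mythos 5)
Your argument for the first inequality is essentially the paper's: decompose the regret over the nodes of $\mathcal{T}_T$ via the time sets $\{t:(h_t,i_t)=(h,i)\}$, apply Lemma~\ref{lem:EG} to each local instance $\mathcal{E}^{(h,i)}$ and Lemma~\ref{lem:approxlipschitz} with $\mathcal{X}^{\node}=\mathcal{X}^{(h,i)}$, invoke the splitting condition to get $T^{(h,i)}\diam\bigl(\mathcal{X}^{(h,i)}\bigr)\leq\sqrt{T^{(h,i)}}$, and conclude by Cauchy--Schwarz (the paper uses Jensen, same thing here). One small correction on your accounting of the constant: the $3$ is simply $2\sqrt{\log 2}\leq 3$ coming from the EG term, and it does not absorb any slack from the splitting rule. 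Any multiplicative slack in the bound $\diam\bigl(\mathcal{X}^{(h,i)}\bigr)\leq 1/\sqrt{T^{(h,i)}}$ would multiply the coefficient of $L$, not the additive constant, so it could not be hidden in the $3$ for large $L$; fortunately no slack is needed, since the check at the penultimate observation of an inner node must have failed, giving $T^{(h,i)}<\diam\bigl(\mathcal{X}^{(h,i)}\bigr)^{-2}$ directly.

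The gap is in the second inequality. You correctly reduce it to the bound $N_T=O\bigl((dT)^{d/(d+2)}\bigr)$ and you name the right ingredients --- the depth-$h$ diameter bound of order $\sqrt{d}\,2^{-h/d}$ (the paper's Lemma~\ref{lem:bin}, itself an induction on the cyclic coordinate splits) and the resulting minimum of order $2^{2h/d}/d$ observations per inner node --- but the step ``summing against the budget $T$, using $N_h\leq 2^h$, and optimizing over $H_T$'' is an assertion, not a derivation. The point that must be proved is that among all binary trees with a given number of nodes, the balanced one minimizes the total observation cost $\sum_h n_h\bigl(2^{2h/d}/(2d)-1\bigr)$; equivalently, that the average depth of the $(N_T-1)/2$ inner nodes is at least $\log_2\bigl((N_T-1)/8\bigr)$. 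The paper isolates this as Lemma~\ref{lem:boundNT}, proved via Jensen's inequality applied to the convex per-depth cost together with a separate combinatorial lemma on binary trees (Lemma~\ref{lem:tree}). Without that step your argument yields a bound on $H_T$ but not on $N_T$, and neither the exponent $d/(d+2)$ nor the constant $(3d)^{d/(2(d+2))}$ actually follows from what you have written.
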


\begin{algorithm}[t]
\begin{minipage}{0.9\textwidth}
\rule{\linewidth}{.5pt}
\vspace*{-0.5em}

\caption{\label{alg:NestedEG} Sequential prediction of function via Nested EG}
\textbf{Initialization:}
\begin{itemize} 
	\item[-] $\mathcal{T} = \bigl\{(0,1)\bigr\}$ a tree (for now reduced at a root node)
	\item[-] Define the bin $\mathcal{X}^{(0,1)} = [0,1]^d$
	\item[-] Start $\mathcal{E}^{(0,1)}$ a replicate of Algorithm~\ref{alg:EG}
\end{itemize}

\smallskip
\textbf{For} $t=1,\dots,T$ 
\begin{itemize}
	\item[1.] Observe $\b x_{t} \in [0,1]^d$ 
	\item[2.] Select the leaf $(h_t,i_t)$ such that $\b x_t \in \mathcal{X}^{(h_t,i_t)}$
	\item[3.] Predict according to $\mathcal{E}^{(h_t,i_t)}$ 
	\item[4.] Observe $y_t$ and feed $\mathcal{E}^{(h_t,i_t)}$ with it
	\item[5.] Update the number of observations predicted by $\mathcal{E}^{(h_t,i_t)}$ \\
	\indent \indent $T^{(h_t,i_t)} \leftarrow \# \bigl\{1\leq s\leq t , \quad (h_s,i_s) = (h_t,i_t)\bigr\}$
	
	\item[6.] \textbf{If} the splitting condition $T^{(h_t,i_t)}+1 \geq \Bigl(\diam\bigl(\mathcal{X}^{(h_t,i_t)}\bigr)\Bigr)^{-2}$ holds \textbf{then} extend the binary tree $\mathcal{T}$ as follows:
		\begin{itemize}
			\item[6.1.] Compute the decomposition $h_t = k_td+r_t$ with $r_t \in \{0,\dots,d-1\}$
			\item[6.2.] Split coordinate $r_t+1$ for node $(h_t,i_t)$
			\begin{itemize}
				\item[6.2.1.] Define the splitting threshold $\tau =  \bigl(x^- + x^+\bigr) / 2$\,, where \\
					 $x^- = \inf_{\b x \in \mathcal{X}^{(h_t,i_t)}} \{x_{r_t+1} \}$ 
					and $x^+ = \sup_{\b x \in \mathcal{X}^{(h_t,i_t)}}\{x_{r_t+1} \}$.
				\item[6.2.2.] Define two children leaves for node $(h_t,i_t)$:
				\begin{itemize}
					\item[-] the left leaf $(h_t+1,2i_t-1)$ with corresponding bin \\ 
					\hspace*{0.25cm} $ \mathcal{X}^{(h_t+1,2i_t-1)} = \{\b x \in \mathcal{X}^{(h_t,i_t)}\, : \  x_{r_t+1}  \in [x^-, \tau[ \} $
					\item[-] the right leaf $(h_t+1,2i_t)$ with corresponding bin \\ \vspace*{-0.3cm}
					 \[ \quad \mathcal{X}^{(h_t+1,2i_t-1)} = \left\{\b x \in \mathcal{X}^{(h_t,i_t)}\,:  \begin{array}{ll} x_{r_t+1} \in [\tau,x^+[  & \mbox{if } x_+ <1 \\  x_{r_t+1} \in [\tau,1] & \mbox{if } x_+ =1 \end{array}  \right\}\]
				\end{itemize}
				\item[6.2.3.] Update $\mathcal{T} \leftarrow \mathcal{T} \cup \bigl\{ (h_t+1,2i_t-1),(h_t+1,2i_t)\bigr\}$
			\end{itemize}
		\end{itemize}
	\end{itemize}
\rule{\linewidth}{.5pt} \vspace{.125cm}
\end{minipage}
\end{algorithm}

\medskip
\noindent 
{\bf Time and storage complexity.} The following lemma provides time and storage complexity guarantees for Algorithm~\ref{alg:NestedEG}. It upper bounds the maximal size of $\mathcal{T}_T$, that is, its number of nodes $N_T$ and its depth $H_T$, which yields in particular the regret bound of order $O\big(T^{(d+1)/(d+2)}\big)$ stated in Theorem~\ref{th:regretbound}. 
 \begin{lemma}
	\label{lem:boundNT}
	Let $T \geq 1$ and $d \geq 1$. Then the depth $H_T$ and the number of nodes $N_T$  of the binary tree $\mathcal{T}_T$ stored by Algorithm~\ref{alg:NestedEG} after $T$ time steps are upper bounded as follows:
	\vspace*{-0.3em}
		\[
			H_T \leq  1 + \frac{d}{2} \log_2 (4dT) \qquad \mbox{and} \qquad N_T \leq  1+ 8 \left(dT\right)^{\frac{d}{d+2}} \,.
		\]
\end{lemma}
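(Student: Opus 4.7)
The proof rests on three observations linking the splitting condition to the geometry of the tree. First, because coordinates are split in round-robin, every node at depth $h = kd + r$ with $0 \leq r < d$ has a hyperrectangular region with $r$ sides of length $2^{-(k+1)}$ and $d-r$ sides of length $2^{-k}$; consequently $\diam(\mathcal{X}^v)^2 \leq d\cdot 4^{-\lfloor h/d \rfloor}$. Second, any internal node $v$ was split at some earlier step, so by the splitting condition $T^v \geq \diam(\mathcal{X}^v)^{-2} - 1 \geq 4^{\lfloor h_v/d\rfloor}/d - 1$. Third, each observation is attributed to exactly one node, namely the unique leaf containing $\b x_t$ at the time $t$ it arrived, hence $\sum_{v \in \mathcal{T}_T} T^v = T$, and in particular $\sum_{v\ \mathrm{internal}} T^v \leq T$.

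The bound on $H_T$ is immediate. Take $u$ to be the parent of a deepest leaf, an internal node at depth $H_T - 1$ satisfying $T^u \leq T$. The second observation then gives $4^{\lfloor (H_T - 1)/d\rfloor} \leq d(T+1)$, which unfolds to $H_T \leq 1 + \tfrac{d}{2}\log_2(4dT)$ after a routine simplification using $T \geq 1$.

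For $N_T$, write $N_I$ for the number of internal nodes; since $\mathcal{T}_T$ is a full binary tree, $N_T = 2N_I + 1$. Summing the splitting inequality over internal nodes and using the third observation yields
\[
    \sum_{v\ \mathrm{internal}} 4^{\lfloor h_v/d \rfloor} \;\leq\; d\,(T + N_I).
\]
The remaining work is to lower-bound the left-hand side by a suitable power of $N_I$. Because the internal nodes form an ancestor-closed subset of the complete binary tree, at most $2^h$ of them lie at depth $h$. For fixed $N_I$, a standard exchange argument shows the sum is minimized by the ``shallowest-packing'' configuration that fills depths $0,1,2,\dots$ completely until $N_I$ nodes are placed; the resulting geometric sum $\sum_{h<H^\star} 2^h \cdot 4^{\lfloor h/d\rfloor}$, with $H^\star \sim \log_2 N_I$, is dominated by its last term and is of order $N_I^{(d+2)/d}$. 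Thus $\sum_{v\ \mathrm{internal}} 4^{\lfloor h_v/d\rfloor} \geq c_d\, N_I^{(d+2)/d}$ for an explicit constant $c_d$, and combining with the upper bound gives $N_I \leq C_d (dT)^{d/(d+2)}$, hence the stated bound on $N_T$ after computing the constant.

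The main technical obstacle is this combinatorial lower bound. The exchange argument itself is elementary, but the constants must be tracked carefully through the geometric-sum calculation in order to produce the sharp prefactor $8$ appearing in the statement; the rest of the proof is routine bookkeeping.
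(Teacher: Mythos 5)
Your proof follows essentially the same route as the paper's: the same three ingredients (the round-robin geometry of the bins, the splitting condition at inner nodes, and $\sum_v T^v = T$), and for $N_T$ the same underlying idea that the balanced tree is the worst case. The paper packages that last step differently — it applies Jensen's inequality to the convex function $h \mapsto 2^{2h/d}/(2d) - 1$ and invokes a separate combinatorial lemma stating that a binary tree with $N_I$ inner nodes has average inner-node depth at least $\log_2(N_I/4)$ — whereas you phrase it as an exchange argument minimizing $\sum_v 4^{\lfloor h_v/d\rfloor}$ over ancestor-closed sets. These are interchangeable; the Jensen route is simply the cleaner way to extract the prefactor $8$, which your sketch explicitly defers ("the constants must be tracked carefully"). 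So the one substantive piece of the lemma — where the $8$ comes from — is asserted rather than proved in your write-up; it is exactly the content of the paper's Lemma~\ref{lem:tree} combined with Jensen.

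Two constant-level slips to watch. First, your diameter bound $\diam(\mathcal{X}^v)^2 \leq d\,4^{-\lfloor h/d\rfloor}$ is correct, but the floor costs you a factor of up to $4$ when you convert back to $4^{h/d}$: from $4^{\lfloor (H_T-1)/d\rfloor} \leq d(T+1)$ you only get $4^{(H_T-1)/d} \leq 4d(T+1) \leq 8dT$, i.e.\ $H_T \leq 1 + \tfrac{d}{2}\log_2(8dT)$, not the stated $\log_2(4dT)$. The paper avoids this by proving the un-floored bound $\diam(\mathcal{X}^{(h,i)}) \leq \sqrt{2d}\,2^{-h/d}$ (Lemma~\ref{lem:bin}), which requires checking that $(1-\tfrac{3r}{4d})4^{r/d} \leq 2$ for $r \in \{0,\dots,d-1\}$; with that, your single-node argument at depth $H_T-1$ does give the stated constant. (Incidentally, the paper instead sums the splitting condition over one inner node per depth $0,\dots,H_T-1$; either works once the un-floored bound is in hand.) Second, in absorbing the $d N_I$ term on the right-hand side of $\sum_v 4^{\lfloor h_v/d\rfloor} \leq d(T+N_I)$ you implicitly use $N_I \leq T$; this is true (every inner node predicted at least one observation before splitting) but should be stated, as the paper does when it bounds $-(N_T-1)/2 \geq -T/2$.
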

Indeed, Algorithm~\ref{alg:NestedEG} needs to store a constant number of parameters at each node of the tree. Thus the space complexity is of order $O(N_T) = O\big(T^{d/(d+2)}\big)$. Besides at each time step $t$, Algorithm~\ref{alg:NestedEG} needs to perform $O(H_t) = O(\log t)$ binary test operations in order to select the leaf $(h_t,i_t)$. It then only needs constant time to update both $\mathcal{E}^{(h_t,i_t)}$ and $\mathcal{T}$. Thus the per-round time complexity of Algorithm~\ref{alg:NestedEG} is of order $O(\log t)$ and the global time complexity is of order $O(T \log T)$. Therefore, we can summarize:
\[
	\mbox{Storage complexity:} \quad O\big(T^{d/(d+2)}\big)\,, \qquad \mbox{Time complexity:}\quad O\big(T \log T\big) \,.
\]

 \smallskip
 \noindent
 {\bf Unknown bounded sets $\mathcal{X} \subset \R^d$~ and $\mathcal{Y} \subset \R$.} 
As we mentioned in the end of Section~\ref{sec:EG}, the generalization of Algorithm~\ref{alg:EG} and thus of Algorithm~\ref{alg:NestedEG} to an unknown set $\mathcal{Y} \subset \R$  can be obtained by using standard tools of individual sequences---see for instance \cite{RooijErvenGrunwaldKoolen2014}. To adapt Algorithm~\ref{alg:NestedEG} to any unknown compact set $\mathcal{X} \subset \R^d$, one can first divide the covariable space $\R^d$ in hyper-rectangle subregions of the form $[n_1,n_1+1] \times \dots \times [n_d,n_d+1]$ and then run independent versions of Algorithm~\ref{alg:NestedEG} on all of these subregions. If $\diam (\mathcal{X}) \leq \sqrt{d} B$ with an unknown value of $B > 0 $, then the number of initial subregions is upper-bounded by $\lceil B \rceil^d$ and by Jensen's inequality, this adaptation would lead to a multiplicative cost of  $\lceil B \rceil^{d/(d+2)}$ in the upper-bound of Theorem~\ref{th:regretbound}.

\medskip
\noindent
{\bf Comparison with other methods.} One may want to obtain similar guarantees by considering other strategies like uniform histograms, kernel regression, or nearest neighbors, which were studied in the context of stationary ergodic processes by \cite{GyoerfiLugosiFargas2001,GyorfiOttucsak2007,BiauBleakleyGyorfiOttucsak2010,BiauPatra2011}. 
We were unfortunately unable to provide any finite-time and deterministic analysis neither for kernel regression nor for nearest neighbors estimation.  
The regret bound of Theorem~\ref{th:regretbound} can however be obtained in an easier manner with uniform histograms. To do so, one can consider the class of uniform histograms $\mathcal{H}_{N}$. We divide the covariable space $[0,1]^d$ in a partition $(I_j)_{j=1,\dots,N}$ of $N$ subregions of equal size.
We define $\mathcal{H}_N$ as the class of $2^N$ prediction strategies that predict the constant values $0$ or $1$ in each bin of the partition.
Competing with this class $\mathcal{H}_N$ of $2^N$ functions by resorting for instance to EG gives the regret bound \vspace*{-0.3em}
\[
	\sum_{t=1}^T \ell\big(\hat y_t, y_t\big) \leq \min_{\b z \in [0,1]^N} \sum_{t=1}^T \ell \biggl( \sum_{j=1}^N z_j \mathds{1}_{I_j}(\b x_t), y_t \biggr) + 2 M \sqrt{T N} \,.
\]
Now, optimizing the number  $N$ of bins in hindsight (or by resorting to the doubling trick) provides a regret bound of order $O\big(T^{(d+1)/(d+2)}\big)$ against any Lipschitz function. The size of the class $\mathcal{H}_N$ is however exponential in $N = O\big(T^{d/(d+2)}\big)$, which makes the method computationally inefficient.

However, in the worst case the nested EG strategy has no better guarantee. Such worst case occurs for large number $N_T$ of nodes, which happens in particular when the trees are height-balanced, that is, when the covariables $\b x_t$ are uniformly distributed in $[0,1]^d$. But the nested EG strategy adapts better to data.
If the covariables $\b x_t$ are non-uniformly allocated (with regions of the space $[0,1]^d$ associated with much more observations than in other regions of similar size), the resulting tree $\mathcal{T}_T$ will be un-balanced, leading to a smaller number of nodes. In the best case, $N_T = O(H_T)$, which yields a regret of order $O(\sqrt{T\log T})$.
By improving the definition of Algorithm~\ref{alg:NestedEG}, one can even obtain the optimal and expected $O(\sqrt{T})$ regret if $(\b x_t)$ is constant. To do so, it only needs to compute online the effective range of the data that belongs to each node $(h,i)$, \vspace*{-0.4em}
\[
	\delta^{(h,i)}_t = \diam \left\{ \b x_s, \quad 0 \leq s \leq t \mbox{ and } (h_s,i_s) = (h,i) \right\}
\]
and substitute the diameter $\diam \mathcal{X}^{(h,i)}$ by $\delta^{(h,i)}_{t+1}$ in the splitting condition of the algorithm (step 6).

\medskip
\noindent
{\bf Proofs.} The proofs of Theorem~\ref{th:regretbound} and Lemma~\ref{lem:boundNT} are based on the following lemma, which controls the size of the regions associated with nodes located at depth $h$ in the tree $\mathcal{T}_T$.

\begin{lemma}
	\label{lem:bin}
	Let $h \geq 0$. Then, for all indices $i=1,\dots,2^h$, the diameter of the region $\mathcal{X}^{(h,i)}$ associated with node $(h,i)$ in Algorithm~\ref{alg:NestedEG} is upper bounded as \vspace*{-0.3em}
	\[p
		 \diam \! \left( \mathcal{X}^{(h,i)}\right)     \leq \sqrt{2d} 2^{-h/d} \,.
	\]	
\end{lemma}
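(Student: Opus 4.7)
The plan is to track the exact side lengths of the hyperrectangle $\mathcal{X}^{(h,i)}$ and then compute its diameter explicitly. Because the splitting coordinate at depth $h$ is $r+1$ with $r = h \bmod d$, the splits cycle deterministically through the $d$ coordinates. I would first show by induction on $h$ that every region $\mathcal{X}^{(h,i)}$ at depth $h$ is a hyperrectangle whose $d$ side lengths are negative powers of $2$: writing $h = kd + r$ with $r \in \{0,1,\dots,d-1\}$, exactly $r$ sides have length $2^{-(k+1)}$ and the remaining $d-r$ have length $2^{-k}$ (each of the $d$ coordinates has been halved either $k$ or $k+1$ times along the path from the root to $(h,i)$). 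The base case $h = 0$ is immediate since $\mathcal{X}^{(0,1)} = [0,1]^d$, and the inductive step is direct from the definition of step~6.2, which halves exactly the side corresponding to coordinate $r+1$.

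Next, from the side lengths the squared diameter is
\[
\diam\bigl(\mathcal{X}^{(h,i)}\bigr)^2 \;=\; r\cdot 4^{-(k+1)} + (d-r)\cdot 4^{-k} \;=\; 4^{-k}\left(d - \tfrac{3r}{4}\right).
\]
The target squared bound is $2d \cdot 4^{-h/d} = 2d\cdot 4^{-k}\cdot 4^{-r/d}$, so after cancelling $4^{-k}$ it suffices to prove the one-variable inequality
\[
d - \tfrac{3r}{4} \;\leq\; 2d\cdot 4^{-r/d}, \qquad r \in \{0,1,\dots,d-1\},
\]
or equivalently, with $u = r/d \in [0,1)$, that $4^u(1 - 3u/4) \leq 2$.

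Finally, this inequality follows from the convexity of $y \mapsto 4^y$ on $[0,1]$: the chord bound gives $4^u \leq 1 + 3u$, hence
\[
4^u\left(1 - \tfrac{3u}{4}\right) \;\leq\; (1+3u)\!\left(1 - \tfrac{3u}{4}\right) \;=\; 1 + \tfrac{9u}{4} - \tfrac{9u^2}{4} \;\leq\; 1 + \tfrac{9}{16} \;<\; 2,
\]
with the maximum of the quadratic attained at $u = 1/2$.

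The main obstacle is this last step: the bound is essentially tight, and a crude estimate like $\diam \leq \sqrt{d}\cdot 2^{-k}$ combined with $2^{-k}\leq 2\cdot 2^{-h/d}$ only gives $2\sqrt{d}\cdot 2^{-h/d}$, which is too weak by a factor $\sqrt{2}$. One really needs both the exact accounting of side lengths and the extra factor $4^{-r/d}$ in the target, so the proof cannot avoid the mild convexity argument above.
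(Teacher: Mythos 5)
Your proof is correct and follows essentially the same route as the paper's: an induction showing that at depth $h = kd+r$ exactly $r$ side lengths equal $2^{-(k+1)}$ and $d-r$ equal $2^{-k}$, followed by reduction to the scalar inequality $4^{u}\bigl(1-\tfrac{3u}{4}\bigr)\leq 2$ for $u\in[0,1]$. The only difference is cosmetic: the paper verifies this last inequality by numerically bounding the maximum ($\approx 1.12 \leq \sqrt{2}$), whereas you give a clean elementary proof via the chord bound $4^{u}\leq 1+3u$, which is a slight gain in rigor but not a different approach.
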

Basically, the proof of Lemma~\ref{lem:bin} consists of an induction on the depth $h$. It is 
\ifthenelse{\equal{\Version}{long}}{postponed to Appendix~\ref{sec:proof-bin}}{deferred to an extended version of this article~\cite{GaillardBaudin2014}}.

\begin{proof}[of Lemma~\ref{lem:boundNT}]
\textbf{Upper bound for $N_T$}.
For each node $(h,i)$, we recall that $T^{(h,i)}= \sum_{t=1}^T \indic{(h_t,i_t) = (h,t)}$ denotes the number of observations predicted by using algorithm $\mathcal{E}^{(h,i)}$.  The total number of observations $T$ is the sum of $T^{(h,i)}$ over all nodes $(h,i)$. That is,
\[
	T  = \sum_{h=0}^{H_T} \sum_{i=1}^{2^h} T^{(h,i)} \indic{(h,i) \in \mathcal{T}_T}
		  \geq \sum_{h=0}^{H_T}  \sum_{i=1}^{2^h} T^{(h,i)} \ \indic{(h,i) \mbox{ is an inner node in } \mathcal{T}_T} \,.
\]
Now we use the fact that each inner node $(h,i)$ has reached its splitting condition (step 6 of Algorithm~\ref{alg:NestedEG}), that is,
$
	T^{(h,i)} + 1 \geq \bigl(\diam \bigl( \mathcal{X}^{(h,i)}\bigr)\bigr)^{-2} \,.
$
Using that $\diam\! \left( \mathcal{X}^{(h,i)}\right) \leq \sqrt{2d}2^{-h/d}$ by Lemma~\ref{lem:bin}, we get  
\vspace*{-0.3em}
\begin{align}
	T & \geq  \sum_{h=0}^{H_T}  \sum_{i=1}^{2^h} \left[-1 +  \Big(\diam \big( \mathcal{X}^{(h,i)}\big)\Big)^{-2}\right] \indic{(h,i) \mbox{ is an inner node }} \nonumber \\
		& \geq \sum_{h=0}^{H_T}    \underbrace{\left(-1+ \frac{2^{2h/d}}{2d}\right)}_{g(h)}  \underbrace{\sum_{i=1}^{2^h}  \indic{(h,i) \mbox{ is an inner node }}}_{n_h }\,. \label{eq:lowerT}
\end{align}
Because $g:\R_+ \to \R$ is convex in $h$, by Jensen's inequality
\[
	T \geq  N_T^{\mathrm{in}} \  g \!\left( \frac{1}{N_T^{\mathrm{in}}}\sum_{h=0}^{H_T} h n_h \right),
\]
where $ N_T^{\mathrm{in}} = \sum_h n_h$ is the total number of inner nodes. 
Now, by Lemma~\ifthenelse{\equal{\Version}{long}}{\ref{lem:tree} in Appendix \ref{sec:proof-tree}}{8 in the extended version of this article~\cite{GaillardBaudin2014}}, because $\mathcal{T}_T$ is a binary tree with $N_T$ nodes in total, it has exactly $N_T^{\mathrm{in}} = (N_T-1)/2$ inner nodes and the average depth of its inner nodes is lower-bounded as \vspace*{-0.5em}
\[
	\frac{1}{N_T^{\mathrm{in}}}\sum_{h=0}^{H_T} h n_h \geq \log_2 \! \left(\frac{N_T-1}{8}\right) \,.
\]
Substituting in the previous bound, it implies
\begin{align*}
	T & \geq \frac{N_T -1}{2} g\biggl( \log_2 \Bigl(\frac{N_T-1}{8}\Bigr) \biggr) = \frac{N_T-1}{2} \left( -1 + \frac{1}{2d} 2^{\frac{2}{d} \log_2 \bigl( (N_T-1)/8\bigr)} \right) \\
		& = - \frac{N_T-1}{2} + \frac{N_T-1}{4d} \left( \frac{N_T-1}{8} \right)^{2/d}  \geq  \underbrace{- \frac{N_T-1}{2}}_{\geq - T /2} + \frac{2}{d} \left( \frac{N_T-1}{8} \right)^{1+2/d} \,.
\end{align*}
By reorganizing the terms, it entails
$
	dT \geq ({3}/{4}) d T \geq \bigl( (N_T-1)/8 \bigr)^{1+2/d}
$.
Thus, $ (N_T-1)/8 \leq (dT)^{{d}/({d+2})}$, 
which yields the desired bound for $N_T$.

\medskip \noindent
\textbf{Upper bound for $H_T$}. We start from~\eqref{eq:lowerT} and we use the fact that for all $h=0,\dots,H_T-1$, there exists at least one inner node of depth $h$ in $\mathcal{T}$. Thus,
\[
	T \geq \sum_{h=0}^{H_T-1}    \left(-1+ \frac{2^{2h/d}}{2d}\right) = -H_T + \frac{1}{2d} \frac{2^{2H_T/d}-1}{2^{2/d}-1}
	\geq -H_T + \frac{2^{2(H_T-1)/d}}{2d} 
\]
where the last inequality is because $(a-1)/(b-1) \geq a/b$ for all numbers $a \geq b > 1$. 
Therefore, by upper-bounding $T \geq H_T$, we get
$
	4T \geq  {2^{2(H_T-1)/d}}/d
$
and thus $2(H_T-1)/d \leq \log_2 (4dT)$ which concludes the proof.
\qed \end{proof}

\begin{proof}[of Theorem~\ref{th:regretbound}]
The cumulative regret suffered by Algorithm~\ref{alg:NestedEG} is controlled by the sum of all cumulative regrets incurred by algorithms $\mathcal{E}^{(h,i)}$. That is,
\[	 
	\hat R_{L,T} \leq \sum_{(h,i) \in \mathcal{T}_T}  \left[ \sum_{t \in S^{(h,i)}} \ell\big(\hat y_t,y_t\big)-\inf_{f \in \mathcal{L}_L^d} \sum_{t \in S^{(h,i)}} \ell\bigl(f (\b x_t), y_t\bigr)\right] \,,
\]
where $S^{(h,i)} = \bigl\{1\leq t\leq T\,: \ (h_t,i_t) = (h,i)\bigr\}$ is the set of time steps assigned to node $(h,i)$. Now, by Lemma~\ref{lem:EG}, the cumulative loss incurred by $\mathcal{E}^{(h,i)}$ satisfies
\begin{align*}
		\sum_{t \in S^{(h,i)}} & \ell\big(\hat y_t,y_t\big)  \leq \inf_{y \in [0,1]} \sum_{t\in S^{(h,i)}} \ell\bigl(y,y_t\bigr) + {2 M \sqrt{T^{(h,i)}\log2}}  \\
	& \leq	 \inf_{f \in \mathcal{L}_L^d} \sum_{t \in S^{(h,i)}} \ell\bigl(f (\b x_t), y_t\bigr) + {M  L \hspace*{-4.5em} \underbrace{\diam \! \left(\mathcal{X}^{(h,i)}\right)}_{\leq  1/\sqrt{T^{(h,i)}} \mbox{ by step 6 of Algorithm~\ref{alg:NestedEG}} }\hspace*{-4.5em} T^{(h,i)}}   + {2 M \sqrt{T^{(h,i)}\log2}} 
\end{align*}
where the second inequality is by Lemma~\ref{lem:approxlipschitz}. Thus,
\[
	\hat R_{L,T} \leq  M  \Bigl(L + \underbrace{2\sqrt{\log 2}}_{\leq 3}\Bigr) \sum_{(h,i) \in \mathcal{T}_T}  \sqrt{T^{(h,i)}} \,.
\] \vspace*{-0.5em}

\noindent
Then, by Jensen's inequality,
\[
	\frac{1}{N_T} \sum_{(h,i)\in \mathcal{T}_T}  \sqrt{T^{(h,i)}} \leq  \sqrt{\frac{1}{N_T} \sum_{(h,i)} T^{(h,i)}} = \sqrt{\frac{T}{N_T}} \,,
\]
which concludes the first statement of the theorem. The second statement follows from Lemma~\ref{lem:boundNT} and because for all $a,b \geq 0$, $\sqrt{a+b} \leq \sqrt{a} +\sqrt{b}$,
\begin{align*}
	& M(3+L)\sqrt{N_T T}  \leq M(3+L)\sqrt{\Big(1+ 4(3dT)^{d/(d+2)}\Big)T} \\
		& \leq M(3+L) \Bigl(\sqrt{T} + \sqrt{4(3dT)^{d/(d+2)}T}\Bigr)  = M(3+L) \Bigl(\sqrt{T} + 2(3d)^{\frac{d}{2(d+2)}}T^{\frac{d+1}{d+2}}\Bigr) \,.  
\end{align*} \vspace*{-1em}
 \qed
 
\end{proof}

\section{Autoregressive framework}
\label{sec:Autoregressive}

We present in this section a technical result that will be useful for later purposes. Here, the forecaster still sequentially observes from time $t=1$ an arbitrary bounded sequence $(y_t)_{t=-\infty,\dots,+\infty}$. However, at time step $t$, it is asked to forecast the next outcome $y_t \in [0,1]$ with knowledge of the past observations $y_1^{t-1} = y_1,\dots,y_{t-1}$ only. We are interested in a strategy that performs asymptotically as well as the best model that considers the last $d$ observations to form the predictions, and this simultaneously for all values of $d \geq 1$. More formally, we denote
\[
	\hat R_{L,T}^d \triangleq \sum_{t=1}^T \ell(\hat y_t,y_t) -  \inf_{f \in \mathcal{L}_L^d} \sum_{t=1}^T \ell\big(f(y_{t-d}^{t-1}),y_t\big) \,,
\]
and we want that for all $d$, the average regrets $\hat R_{L,T}^d / T$ vanish as $T \to \infty$.
We show how it can be obtained via a meta-algorithm (Algorithm~\ref{alg:meta-algo}) that combines an increasing sequence of nested EG forecasters described in Algorithm~\ref{alg:fixed-past}. The sequence is denoted by $\mathcal{A}_1,\mathcal{A}_2, \dots$ and is such that for each $d \geq 1$,  $\mathcal{A}_d$\footnote{\label{footnote} Algorithm~$\mathcal{A}_d$ will only be used by a meta-algorithm for time steps $t \geq t_d$} forms predictions for $t \geq t_d$ for some starting time $t_d \geq 1$ and satisfies the regret bound stated in Lemma~\ref{lem:fixed-past}. 

\begin{algorithm}[htbp]
\begin{minipage}{0.9\textwidth}
\rule{\linewidth}{.5pt}
\vspace*{-0.5em}
	\caption{\label{alg:fixed-past} Forecaster $\mathcal{A}_d$ for fixed past $d$.}
	\textbf{Parameter:} $d \geq 1$ and $t_d$, a starting time
		
	\smallskip
	{\bf For} $t \leq t_{d}-1$ \\
	\indent Form no prediction$^{ \ref{footnote}}$ and observe $y_t$
	
	\smallskip
	{\bf For} $t=t_d,\dots, T$
	\begin{itemize}
		\item[1.] define $\b x_t = y_{t-d}^{t-1}$ and  feed Algorithm~\ref{alg:NestedEG} with $\b x_t \in [0,1]^d$
		\item[2.] predict $f_{d,t}$ according to Algorithm~\ref{alg:NestedEG} and feed Algorithm~\ref{alg:NestedEG} with $y_t$
	\end{itemize}
\rule{\linewidth}{.5pt} \vspace{.125cm}
\end{minipage}
\end{algorithm}

\begin{lemma}[Fixed past $\b d$]
	\label{lem:fixed-past}
	Let $T \geq 1$, $d \geq 1$, $L > 0$, and $t_d \geq d+1$. Then, Algorithm~\ref{alg:fixed-past} has a regret upper-bounded as
\[
	 \sum_{t=t_d}^T \ell(f_{d,t},y_t) -  \inf_{f \in \mathcal{L}_L^d} \sum_{t=t_d}^T \ell(f(y_{t-d}^{t-1}),y_t) \leq  M(3+L) \Bigl(\sqrt{T} + 2(3d)^{\frac{d}{2(d+2)}}T^{\frac{d+1}{d+2}}\Bigr)  \,.
\]
\end{lemma}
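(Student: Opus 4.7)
The plan is to observe that Lemma~\ref{lem:fixed-past} is essentially an immediate corollary of Theorem~\ref{th:regretbound}, once one correctly identifies the sequence fed to Algorithm~\ref{alg:NestedEG} and re-indexes time.

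First, I would set $\b x_t = y_{t-d}^{t-1} \in [0,1]^d$ for $t \geq t_d$, which is well-defined because $t_d \geq d+1$ ensures that $t-d \geq 1$, i.e., the required past observations have already been seen. Algorithm~\ref{alg:fixed-past} then simply runs Algorithm~\ref{alg:NestedEG} on the sequence $(\b x_t, y_t)_{t=t_d,\dots,T}$, which lives in $[0,1]^d \times [0,1]$, exactly as required for the individual-sequence setting of Section~\ref{sec:IndividualSequences}. Note that the class $\mathcal{L}_L^d$ appearing in the two regret statements is identical (the Lipschitz class over $[0,1]^d$), so the infimum on the right-hand side of the lemma is literally the same quantity as the one in Theorem~\ref{th:regretbound} applied to this relabeled sequence.

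Next I would re-index: let $T' = T - t_d + 1$ be the number of time steps on which the inner Algorithm~\ref{alg:NestedEG} is actually active. Applying Theorem~\ref{th:regretbound} with horizon $T'$ to this relabeled sequence yields
\[
\sum_{t=t_d}^T \ell(f_{d,t}, y_t) - \inf_{f \in \mathcal{L}_L^d} \sum_{t=t_d}^T \ell\bigl(f(y_{t-d}^{t-1}), y_t\bigr) \leq M(3+L)\Bigl(\sqrt{T'} + 2(3d)^{\frac{d}{2(d+2)}}(T')^{\frac{d+1}{d+2}}\Bigr).
\]
Finally, since $T' \leq T$ and both $u \mapsto \sqrt{u}$ and $u \mapsto u^{(d+1)/(d+2)}$ are nondecreasing on $\R_+$, I may replace $T'$ by $T$ on the right-hand side, which gives the claimed bound.

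There is no real obstacle here: the only thing one has to be careful about is that the finite-time bound of Theorem~\ref{th:regretbound} is stated uniformly over all horizons $T \geq 1$, so shifting the starting time from $1$ to $t_d$ costs nothing beyond the trivial inequality $T' \leq T$. One should just make sure to invoke Theorem~\ref{th:regretbound} with the horizon corresponding to the number of predictions actually made, rather than with $T$ itself, before monotonically inflating the bound.
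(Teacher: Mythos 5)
Your proposal is correct and takes essentially the same approach as the paper, which simply states that the bound is a straightforward corollary of Theorem~\ref{th:regretbound}; your explicit re-indexing via $T' = T - t_d + 1$ and the monotonicity argument to replace $T'$ by $T$ is exactly the intended (and otherwise omitted) detail.
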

\begin{proof}
	The regret bound is a straightforward corollary of Theorem~\ref{th:regretbound}. \qed
\end{proof}

Now we show how to obtain the regret bound of Lemma~\ref{lem:fixed-past} simultaneously for all $d \geq 1$. To do so, we consider an increasing sequence of integers $(t_d)$ such that $t_1 = 2$. Namely, $t_d$ states at which time step algorithm $\mathcal{A}_d$ starts to form predictions and thus to be combined in Algorithm~\ref{alg:meta-algo}. 
We define at each time step $s \geq 1$ the number of active algorithms $D_s = \sup \{ d \geq 1: t_d \leq s \}$.
Basically, Algorithm~\ref{alg:meta-algo} is a meta-algorithm that combines via EG the predictions formed by all forecasters $\mathcal{A}_d$ for $d \geq 1$. Note that at time step $t$, only the $D_t$ first forecasters $\mathcal{A}_1,\dots,\mathcal{A}_{D_t}$ suggest predictions. 

%
%

\begin{algorithm}[t]
\begin{minipage}{0.9\textwidth}
\rule{\linewidth}{.5pt}
\vspace*{-0.5em}
	\caption{\label{alg:meta-algo} Extension of the Algorithm \ref{alg:NestedEG} to unknown past $d$.}
	\textbf{Parameter:} 
		\begin{itemize}
			\item $(t_d)$ an increasing sequence of starting times
			\item $(\mathcal{F}_d)_{d \geq 1}$ a sequence of forecasters such that $\mathcal{F}_d$ forms predictions for time steps $t \geq t_d$
			\item $(\eta_t)$ a sequence of learning rates
		\end{itemize}
	\smallskip
	{\bf Initialization:}  
	\begin{itemize}
		\item {\bf For} $t=1,\dots t_1-1$, predict $\hat y_t = 1/2$ 
		\item set $D_{t_1} = 1$ and $\hat{p}_{1,{t_1}} = 1$
	\end{itemize}
	
	\smallskip
	{\bf For} $t=t_1,\dots, T$
	\begin{itemize}
		\item[1.] {\bf For} each  $d = 1,\dots,D_t$, denote by $f_{d,t}$ the prediction formed by $\mathcal{F}_d$
		\item[2.] predict $\hat y_t = \sum_{d =1}^{D_t} \hat p_{d,t} f_{d,t}$		
		\item[3.] update the number of active forecasters 
			\begin{itemize}
				\item[3.1] if the next starting time occurs in $t+1$, i.e., $t_{D_{t}+1} = t+1$ then 
					\begin{itemize}
						\item[-] increase the number of forecasters by 1: \quad $D_{t+1} = D_t +1$
						\item[-] initialize the weight of the new forecaster: \quad $p_{D_{t+1},t+1} = 	1/D_{t+1}$
					\end{itemize}
				\item[3.2] otherwise if no expert starts in $t+1$, make no change: $D_{t+1} = D_t$
			\end{itemize}
		\item[4.] observe $Y_t$  and perform exponential weight update component-wise for $d=1,\dots,D_t$ as
						\[
							\hat p_{d,t+1} =
									\frac{D_t}{D_{t+1}} \  \frac{ \hat p_{d,t}^{{\eta_{t+1}}/{\eta_t}} e^{-\eta_{t+1} \ell(f_{d,t},y_t)} }{\sum_{k=1}^{D_{t}}  \hat p_{k,t}^{{\eta_{t+1}}/{\eta_t}} e^{-\eta_{t+1} \ell(f_{k,t},y_t)}  } \,.
						\]
	\end{itemize}
\rule{\linewidth}{.5pt} \vspace{.125cm}
\end{minipage}
\end{algorithm}

\medskip
Lemma~\ref{lem:meta-algo} controls the cumulative loss of Algorithm~\ref{alg:meta-algo} by the cumulative loss of the best strategy $\mathcal{F}_d$. The comparison is performed only on the time steps where $\mathcal{F}_d$ is active (i.e., forms a prediction).

\begin{lemma}
	\label{lem:meta-algo}
	Let $T \geq 1$ and $(\eta_t)_{t \geq 1}$ be a decreasing sequence of non-negative learning rates. Then, Algorithm~\ref{alg:meta-algo} satisfies for all $d \in 1,\dots, D_T \triangleq \sup\{ d, \ t_d \leq T\}$
	\vspace*{-0.6em}
	\[
		\sum_{t=t_d}^T \ell(\hat y_t,y_t)  - \ell(f_{d,t},y_t) \leq  \frac{1}{\eta_{T+1}}\log (D_{T+1} ) + \frac{1}{8} \sum_{t=t_d}^T \eta_t \,,
	\]
	which implies with learning rates $\eta_t = 2 / \sqrt{  t}$ for $t \geq 1$ the following regret bound 
	\vspace*{-0.4em}
	\[
		\sum_{t=t_d}^T \ell(\hat y_t,y_t) - \ell(f_{d,t},y_t) \leq \sqrt{T+1}  \log D_{T+1} \,.
	\]
\end{lemma}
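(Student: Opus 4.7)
The plan is to combine a standard exponentially-weighted-average argument with two adaptations, one for the decreasing learning rate $\eta_t$ and one for the sleeping-experts structure in which a new forecaster $\mathcal{F}_d$ becomes active at time $t_d$ with initial weight $1/D_{t_d}$. Fix a comparator index $d \leq D_{T+1}$ and define the potential
\[
   \Phi_t \;=\; -\frac{1}{\eta_t}\log \hat p_{d,t}\,, \qquad t \geq t_d,
\]
so that $\Phi_{t_d} = (\log d)/\eta_{t_d}$ (since $\hat p_{d,t_d} = 1/D_{t_d} = 1/d$). Taking logs in the exponential update of step~4 and dividing by $\eta_{t+1}$ yields the identity
\[
   \ell(f_{d,t},y_t) \;=\; \Phi_{t+1} - \Phi_t \;-\; \frac{1}{\eta_{t+1}}\log\frac{D_{t+1}}{D_t} \;-\; \frac{\log Z_t}{\eta_{t+1}}\,,
\]
where $Z_t = \sum_{k \leq D_t} \hat p_{k,t}^{\eta_{t+1}/\eta_t}\,e^{-\eta_{t+1}\ell(f_{k,t},y_t)}$ is the normalizer.

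Summing from $t=t_d$ to $T$, the regret against $\mathcal{F}_d$ splits into three groups. For the combinatorial/telescoping part, only times at which a new expert is born contribute to $\log(D_{t+1}/D_t)$; those contributions telescope to $\log(D_{T+1}/d)$, and each is weighted by some $1/\eta_{t_e}\leq 1/\eta_{T+1}$ using the non-increasing assumption on $\eta_t$. Together with $\Phi_{t_d} \leq (\log d)/\eta_{T+1}$ and $\Phi_{T+1}\geq 0$, these two groups sum to at most $(\log D_{T+1})/\eta_{T+1}$, which matches the first term in the claimed bound.

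The main obstacle is bounding the remaining ``mixability gap'' $\sum_{t=t_d}^T[\ell(\hat y_t,y_t) + \log Z_t/\eta_{t+1}]$ by $\tfrac{1}{8}\sum_t \eta_t$. The plan is to apply convexity of $\ell$ to get $\ell(\hat y_t,y_t) \leq \sum_k \hat p_{k,t}\,\ell(f_{k,t},y_t)$, then Hoeffding's lemma for $[0,1]$-valued variables. The subtlety is that $Z_t$ involves the rescaled weights $\hat p_{k,t}^{\eta_{t+1}/\eta_t}$ rather than $\hat p_{k,t}$ themselves; the cleanest route is to rewrite the update (up to the $D_t/D_{t+1}$ rebalancing) as the ``re-computed'' EWA forecaster $\hat p_{k,t} \propto \exp(-\eta_t L_{k,t-1})$ and invoke the classical time-varying learning rate lemma (e.g.\ Cesa-Bianchi--Lugosi, 2006, Theorem~2.3), for which non-increasing $\eta_t$ is precisely the hypothesis guaranteeing a per-round drift of at most $\eta_{t+1}/8 \leq \eta_t/8$.

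Finally, plugging in $\eta_t = 2/\sqrt{t}$ gives $1/\eta_{T+1} = \sqrt{T+1}/2$ and $\tfrac{1}{8}\sum_{t=1}^T \eta_t \leq \tfrac{1}{4}\sum_{t=1}^T 1/\sqrt{t} \leq \sqrt{T}/2$, so the regret bound $(\log D_{T+1})/\eta_{T+1} + \tfrac{1}{8}\sum_t \eta_t$ becomes $\tfrac{\sqrt{T+1}}{2}(\log D_{T+1}+1) \leq \sqrt{T+1}\log D_{T+1}$, after noting that the $D_{T+1} = 1$ case is trivial.
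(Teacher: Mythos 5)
Your overall skeleton (convexity plus Hoeffding for the mixability step, a change of learning rate in the exponent, and a telescoping potential) matches the paper's proof, but there is a genuine gap in your middle group: the claim that $\sum_{t=t_d}^T\bigl[\ell(\hat y_t,y_t)+\eta_{t+1}^{-1}\log Z_t\bigr]\leq\tfrac18\sum_t\eta_t$ is false. Convexity and Hoeffding control $-\eta_t^{-1}\log\sum_k\hat p_{k,t}e^{-\eta_t\ell(f_{k,t},y_t)}$, not $-\eta_{t+1}^{-1}\log Z_t$; passing from one to the other by Jensen's inequality (applied to $x\mapsto x^{\eta_t/\eta_{t+1}}$ against the uniform distribution on $D_t$ points) produces the extra term $\bigl(\eta_{t+1}^{-1}-\eta_t^{-1}\bigr)\log D_t$, which for $\eta_t=2/\sqrt t$ is of order $(\log D_t)/(4\sqrt t)$ --- the same order as $\eta_t/8=1/(4\sqrt t)$, and larger as soon as $D_t\geq3$. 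Concretely, if all $D_t$ weights equal $1/D_t$ and all active forecasters output the same prediction, then $\ell(\hat y_t,y_t)+\eta_{t+1}^{-1}\log Z_t=\bigl(\eta_{t+1}^{-1}-\eta_t^{-1}\bigr)\log D_t>\eta_t/8$. The appeal to the recomputed-weights EWA lemma does not rescue this: that lemma's drift contains exactly the same $(\eta_{t+1}^{-1}-\eta_t^{-1})\log N$ contribution (whence the factor $2$ in front of its $\log N/\eta$ term), and the reduction is not exact anyway because a newly born forecaster enters with weight $1/D_{t+1}$ rather than with a recomputed exponential weight.

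The correction term is not an obstacle, but it must be kept and telescoped, and your premature bounding destroys that cancellation. The paper's bookkeeping uses the single potential $\Psi_t=\eta_t^{-1}\log\bigl(D_t\,\hat p_{d,t}\bigr)$ in place of your $\Phi_t$: the identity $\bigl(\eta_{t+1}^{-1}-\eta_t^{-1}\bigr)\log D_t+\eta_{t+1}^{-1}\log(D_{t+1}/D_t)=\eta_{t+1}^{-1}\log D_{t+1}-\eta_t^{-1}\log D_t$ lets the Jensen correction, the birth terms, and the initial weight $\hat p_{d,t_d}=1/D_{t_d}$ collapse exactly to $\eta_{T+1}^{-1}\log\bigl(D_{T+1}\hat p_{d,T+1}\bigr)\leq\eta_{T+1}^{-1}\log D_{T+1}$, with no loss. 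If instead you first bound $\Phi_{t_d}$ and each $\eta_{t+1}^{-1}\log(D_{t+1}/D_t)$ by $\eta_{T+1}^{-1}(\cdots)$, as you do, and then add the (necessary) correction term separately, you only reach $2\,\eta_{T+1}^{-1}\log D_{T+1}+\tfrac18\sum_t\eta_t$. (A last, minor point that the paper also glosses over: the final step $\tfrac12\sqrt{T+1}\,(\log D_{T+1}+1)\leq\sqrt{T+1}\log D_{T+1}$ needs $\log D_{T+1}\geq1$, so the cases $D_{T+1}\in\{1,2\}$ deserve a separate word.)
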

\vspace*{-0.5em}

Note that the choice $\eta_t = \min_{s\leq t} \sqrt{ \log D_t / t}$ for $t \geq 1$ may yield the right dependency $\sqrt{\log D_T}$ in the number of experts. Similarly, the term $\sqrt{T}$ can be replaced by $\sqrt{T-t_d+1}$ by considering for instance the aggregation rule of~\cite{GaillardStoltzErven2014} with one learning rate sequence for each expert. The proof of Lemma~\ref{lem:meta-algo} follows the standard one of the exponentially weighted average forecaster. It is 
\ifthenelse{\equal{\Version}{long}}{postponed to Appendix~\ref{sec:proof-meta-algo}}{deferred to an extended version of this article~\cite{GaillardBaudin2014}}. 
It could also be recovered by noting that our setting with starting experts is almost a particular case of the setting of sleeping experts introduced in~\cite{FreundSchapireSingerWarmuth1997}. We could thus obtain similar results by following algorithms and proofs designed for this setting. We write ``almost'' because here we do not know in advance the final number of active experts, which explains the non-optimal term in $D_t$.

\begin{theorem}
\label{th:allpast}
Let $T \geq 1$, $L >0$. Let  $(t_d)$ be an increasing sequence of integers such that $t_1 = 2$.  Then, for all $d \leq D_T \triangleq \sup\{ d, \ t_d \leq T\}$, Algorithm~\ref{alg:meta-algo} run with an increasing sequence $(t_d)$ of starting times, sequence of forecasters $(\mathcal{A}_d)$ and sequence of learning rates $\eta_t = {2/\sqrt{ t}}$ satisfies
	\begin{align*}
		\hat R_{L,T}^d  & = \sum_{t=1}^T \ell(\hat y_t,y_t) -  \inf_{f \in \mathcal{L}_L^d} \sum_{t=1}^T \ell(f(y_{t-d}^{t-1}),y_t)  \\
			& \leq t_d + \sqrt{T+1} \log D_{T+1} + M(3+L) \Bigl(\sqrt{T} + 2(3d)^{\frac{d}{2(d+2)}}T^{\frac{d+1}{d+2}}\Bigr)\,.
	\end{align*}
Consequently,    for all $d \geq 1$,\  $\limsup_{T \to \infty} \left( \hat R_{L,T}^d / T \right) \leq 0$.
\end{theorem}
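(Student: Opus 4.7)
The plan is to decompose the regret $\hat R_{L,T}^d$ into three pieces and bound each using the already-established lemmas. Fix $d \leq D_T$ and choose (or approach via minimizing sequence) $f^\star \in \mathcal{L}_L^d$ achieving the infimum in the definition of $\hat R_{L,T}^d$. The natural split is
\[
\hat R_{L,T}^d = \underbrace{\sum_{t=1}^{t_d-1}\bigl[\ell(\hat y_t,y_t)-\ell(f^\star(y_{t-d}^{t-1}),y_t)\bigr]}_{\text{(A) warm-up}} + \underbrace{\sum_{t=t_d}^T\bigl[\ell(\hat y_t,y_t)-\ell(f_{d,t},y_t)\bigr]}_{\text{(B) meta-aggregation}} + \underbrace{\sum_{t=t_d}^T\bigl[\ell(f_{d,t},y_t)-\ell(f^\star(y_{t-d}^{t-1}),y_t)\bigr]}_{\text{(C) fixed-past regret}}.
\]

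First I would bound (A) trivially by $t_d - 1 \leq t_d$, using that both $\ell(\hat y_t,y_t)$ and $\ell(f^\star(y_{t-d}^{t-1}),y_t)$ lie in $[0,1]$ so each summand is at most $1$. Next, for (B) I would invoke Lemma~\ref{lem:meta-algo} with the learning rate $\eta_t = 2/\sqrt{t}$: this directly yields the bound $\sqrt{T+1}\log D_{T+1}$ since $\mathcal{F}_d = \mathcal{A}_d$ produces exactly $f_{d,t}$ from time $t_d$ onward. For (C) I would upper-bound the sum by the regret of $\mathcal{A}_d$ against the full Lipschitz class on the time window $\{t_d,\dots,T\}$, and apply Lemma~\ref{lem:fixed-past} to obtain $M(3+L)\bigl(\sqrt{T} + 2(3d)^{d/(2(d+2))}T^{(d+1)/(d+2)}\bigr)$. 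Summing the three bounds gives exactly the claimed inequality.

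The asymptotic consequence $\limsup_T \hat R_{L,T}^d / T \leq 0$ then follows by dividing the bound by $T$ and letting $T\to\infty$: the first term is $t_d/T \to 0$ since $t_d$ is fixed, the third term is of order $T^{-1/(d+2)}\to 0$, and the middle term is $o(1)$ because $D_{T+1}\leq T+1$ (there are at most $T+1$ distinct starting times in $\{1,\dots,T+1\}$) so $\sqrt{T+1}\log D_{T+1}/T \leq \sqrt{T+1}\log(T+1)/T \to 0$.

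There is no serious obstacle: the main subtlety is purely bookkeeping in the split — making sure (A) is handled cleanly despite the infimum being over the full horizon $[1,T]$ rather than $[t_d,T]$. This is resolved by comparing pointwise against a single $f^\star$ (the global minimizer or a minimizing sequence) rather than two different minimizers on the two subintervals, so that (C) only needs the one-sided inequality $\sum_{t=t_d}^T \ell(f^\star(y_{t-d}^{t-1}),y_t) \geq \inf_{f\in\mathcal{L}_L^d}\sum_{t=t_d}^T \ell(f(y_{t-d}^{t-1}),y_t)$, which is immediate.
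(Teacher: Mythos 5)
Your proof is correct and follows essentially the same route as the paper: the warm-up phase $t < t_d$ is bounded by $t_d$ using $\ell \in [0,1]$, the aggregation regret is controlled by Lemma~\ref{lem:meta-algo}, and the fixed-past regret by Lemma~\ref{lem:fixed-past}. The paper states this in one line; your version merely makes explicit the (correct) bookkeeping of comparing against a single minimizer $f^\star$ over the full horizon.
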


\begin{proof}
	The regret bound is by combining Lemma~\ref{lem:fixed-past} and Lemma~\ref{lem:meta-algo}, together with $\ell(\hat y_t,y_t) \leq 1$ for $t < t_d$. 
	 The second part is obtained by dividing by $T$ and making $T$ grows to infinity. The last part is then a consequence of Theorem~2.
\qed \end{proof}

\section{Convergence to $L^\star$}
\label{sec:Lstar}

In this section, we present our main result by deriving from Theorem~\ref{th:allpast} similar results obtained in a stochastic setting by \cite{GyoerfiLugosiFargas2001,GyorfiOttucsak2007,BiauBleakleyGyorfiOttucsak2010,BiauPatra2011}. 

We leave here the setting of individual sequences of the previous sections and we assume that the sequence of observations $y_1,\dots,y_T$ is now generated by some stationary ergodic process. More formally, we assume that a stationary bounded ergodic process $(Y_t)_{t=-\infty,\dots,\infty}$ is sequentially observed. At time step $t$, the learner is asked to form a prediction $\hat Y_t$ of the next outcome $Y_t \in [0,1]$ of the sequence with knowledge of the past observations $Y_1^{t-1} = Y_1,\dots,Y_{t-1}$. 
The nested EG strategy, as a consequence of the deterministic regret bound of Theorem~\ref{th:regretbound}, will be shown to be consistent. We recall that \cite{Algoet1994} proved that all prediction strategies verify almost surely
$
	\liminf_{T \to \infty} \bigl\{  \frac{1}{T}  \sum_{t=1}^T \ell(\hat Y_t, Y_t) \bigr\} \geq L^\star 
$, where $L^\star$, defined in \eqref{eq:Lstar}, is the expected minimal loss over all possible Borel estimations of the outcome $Y_0$ based on the infinite past. To put it another way: we cannot hope to design strategies outperforming $L^\star$. It is thus usual to require that $\sum_{t=1}^T \ell(\hat Y_t, Y_t) / T$ tends to $L^\star$ as $T \to \infty$. 

\subsection*{From individual sequences to ergodic processes}

Theorem~\ref{th:Lstar} shows that any strategy that achieves a deterministic regret bound for individual sequences as in Theorem~\ref{th:allpast} predicts asymptotically as well as the best strategy defined by a Borel function.

\medskip
Theorem~\ref{th:Lstar} will make two main assumptions on the ergodic sequence to be predicted. 
First, the sequence is supposed to lie in $[0,1]$. As earlier, this assumption can be easily relaxed to any bounded subset of $\R$---see remarks of Sections \ref{sec:EG} and \ref{sec:NestedEG}. The generalization to unbounded sequence is left to future work and should follow from \cite{GyorfiOttucsak2007}.
Second, Theorem~\ref{th:Lstar} assumes that for all $d \geq 1$ the law of $Y_{-d}^{-1}$ is regular, that is, for any Borel set $S \subset [0,1]^d$ and for any $\epsilon >0$, one can find a compact set $K$ and an open set $V$ such that
\[
	K \subset S \subset V, \qquad \mbox{and} \qquad \P_{Y_{-d}^{-1}}(V \backslash K) \leq \epsilon \,.
\]
This second assumption is considerably weaker than the assumptions required by  \cite{BiauPatra2011}  on the law of $(Y_{-d}^{-1})$ obtained for quantile prediction. The authors indeed imposed that the random variables $\| Y_{-d}^{-1} -s \|$ have continuous distribution functions for all $s \in \R^d$ and the conditional distribution function $F_{Y_0 | Y_{-\infty}^{-1}}$ to be increasing. One can however argue that their assumptions are thus hardly comparable with ours because they consider unbounded ergodic processes. We aim at obtaining in the future minimal assumptions for any generic convex loss function $\ell$ in the case of unbounded ergodic process, see~\cite{MorvaiWeiss2011}.

\begin{theorem}
\label{th:Lstar}
Let $(Y_t)_{t=-\infty,\dots,\infty}$ be a stationary bounded ergodic process. We assume that for all $t$, $Y_t \in [0,1]$ almost surely and that for all $d \geq 1$ the law of $Y_{-d}^{-1} = (Y_{-d}, \dots, Y_{-1})$ is regular. Let $\ell: [0,1]^2 \to [0,1]$ be a loss function $M$-Lipschitz in its first argument. Assume that a prediction strategy satisfies for all $d \geq 1$, \vspace*{-0.4cm}
\begin{equation*}
	\forall L\geq 0 \qquad \limsup_{T \to \infty} \left( \frac{1}{T} \sum_{t=1}^T \ell\Big(\hat Y_t,Y_t \Big) \right)  \leq  \limsup_{T \to \infty} \left(  \inf_{f \in \mathcal{L}_{L}^d} \frac{1}{T} \sum_{t=1}^T \ell\Bigl(f(Y_{t-d}^{t-1}),Y_t\Bigr) \right) \,,
\end{equation*}
then, almost surely, \vspace*{-0.4cm}
\[
	\limsup_{T \to \infty} \left( \frac{1}{T} \sum_{t=1}^T \ell \Big(\hat Y_t,Y_t\Big) \right)  \leq  L^{\star}\,.
\]
\end{theorem}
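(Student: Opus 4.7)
The plan is to chain three limits---first $T\to\infty$ by Birkhoff's ergodic theorem, then $L\to\infty$ by Lipschitz approximation of Borel functions, then $d\to\infty$ by Algoet's lemma---to convert the deterministic-regret hypothesis into the bound $L^\star$. First I would fix $d\geq 1$ and $L\geq 0$. For any single $f\in\mathcal{L}_L^d$, the bounded sequence $(\ell(f(Y_{t-d}^{t-1}),Y_t))_t$ is a stationary ergodic transform of $(Y_t)$, so Birkhoff's theorem gives $(1/T)\sum_t \ell(f(Y_{t-d}^{t-1}),Y_t) \to \E[\ell(f(Y_{-d}^{-1}),Y_0)]$ almost surely. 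Taking a countable $\|\cdot\|_\infty$-dense family $\mathcal{D}\subset\mathcal{L}_L^d$ (e.g.\ piecewise-linear functions with rational breakpoints and slope at most $L$) and intersecting countably many null sets, this convergence holds simultaneously for all $f\in\mathcal{D}$; the $M$-Lipschitz continuity of $\ell$ then transfers the inequality to the full class, yielding almost surely
\[
\limsup_{T\to\infty}\inf_{f\in\mathcal{L}_L^d}\frac{1}{T}\sum_{t=1}^T\ell(f(Y_{t-d}^{t-1}),Y_t)\leq \inf_{f\in\mathcal{L}_L^d}\E[\ell(f(Y_{-d}^{-1}),Y_0)] =: L^\star_{d,L}.
\]
Combined with the theorem's hypothesis and a further countable intersection over rational $L$ and integer $d$, this gives $\limsup_T(1/T)\sum_t\ell(\hat Y_t,Y_t)\leq L^\star_{d,L}$ almost surely for all $d$ and $L$.

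Next I would let $L\to\infty$ to pass from Lipschitz to Borel predictors. Setting $L^\star_d := \E[\inf_{y\in[0,1]}\E[\ell(y,Y_0)\mid Y_{-d}^{-1}]]$, a standard measurable-selection argument identifies $L^\star_d=\inf_{f\in\mathcal{B}^d}\E[\ell(f(Y_{-d}^{-1}),Y_0)]$, so it suffices to approximate any Borel $f^\star:[0,1]^d\to[0,1]$ by a Lipschitz function in expected loss. This is exactly where the regularity assumption on the law of $Y_{-d}^{-1}$ enters: Lusin's theorem supplies a compact set $K_\epsilon$ with $\P_{Y_{-d}^{-1}}(K_\epsilon^c)\leq\epsilon$ on which $f^\star$ agrees with a continuous function, and a McShane extension then yields a Lipschitz $f_\epsilon$ (with some finite constant $L_\epsilon$) coinciding with $f^\star$ on $K_\epsilon$. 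Since $\ell\in[0,1]$, the two expected losses differ by at most $\epsilon$, so $L^\star_{d,L}\downarrow L^\star_d$ as $L\to\infty$.

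Finally I would let $d\to\infty$ and invoke Algoet's lemma \cite{Algoet1994}: the filtrations $\sigma(Y_{-d}^{-1})$ increase to $\sigma(Y_{-\infty}^{-1})$, and a L\'evy upward martingale argument (using that $y\mapsto\E[\ell(y,Y_0)\mid\mathcal{F}]$ is continuous uniformly in $\mathcal{F}$ thanks to the $M$-Lipschitz continuity of $\ell$ and boundedness of the prediction space $[0,1]$) gives $\inf_y\E[\ell(y,Y_0)\mid Y_{-d}^{-1}]\to\inf_y\E[\ell(y,Y_0)\mid Y_{-\infty}^{-1}]$ in $L^1$, whence $L^\star_d\to L^\star$. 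Chaining the three bounds yields the conclusion.

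The main obstacle is the middle step: one must produce a Lipschitz approximation that controls the \emph{expected} loss, not merely the pointwise error, and it is precisely here that the regularity hypothesis on $Y_{-d}^{-1}$ is used in an essential way. The first step is a routine application of the ergodic theorem---the only care needed is in passing from a single $f$ to the infimum over an uncountable class, handled by a countable dense subfamily together with the uniform continuity of $\ell$---and the third step is a classical reduction due to Algoet.
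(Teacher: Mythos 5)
Your overall route is the same as the paper's: fix $d$ and $L$, pass $T\to\infty$ via the ergodic theorem after exchanging $\limsup$ and $\inf$ (the paper invokes Breiman's generalized ergodic theorem; for a fixed $f$ plain Birkhoff indeed suffices, and your countable-dense-family device is a valid, if slightly heavier, way to handle the uncountable infimum), then let $L\to\infty$ using the regularity of the law of $Y_{-d}^{-1}$ to equate the Lipschitz and Borel infima, and finally let $d\to\infty$ by a (super)martingale convergence argument together with the measurable selection theorem, exactly as in the paper's appendix.

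The one step that is wrong as written is the middle approximation. Lusin's theorem gives a compact $K_\epsilon$ with $\P_{Y_{-d}^{-1}}(K_\epsilon^c)\leq\epsilon$ on which the Borel function $f^\star$ restricts to a \emph{continuous} function, but a continuous function on a compact set need not be Lipschitz, so there is in general no McShane extension ``coinciding with $f^\star$ on $K_\epsilon$'': the McShane construction requires the restriction itself to be Lipschitz. The paper's Lemma~\ref{lem:approxBorel} avoids this by splitting the approximation into two stages and accepting an extra uniform-approximation error: first a version of Lusin's theorem producing a \emph{globally} continuous $g$ with $\P_{X}\bigl\{|f^\star-g|\geq\delta\bigr\}\leq\epsilon$, then Stone--Weierstrass to approximate $g$ uniformly by Lipschitz functions; the expected losses are then compared via the bound $\epsilon+M\delta$ rather than via exact coincidence on $K_\epsilon$. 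Your argument is repaired the same way, or by replacing the McShane extension with an inf-convolution $f_L(x)=\inf_{z\in K_\epsilon}\bigl\{f^\star(z)+L\|x-z\|\bigr\}$, which is $L$-Lipschitz and converges uniformly on $K_\epsilon$ to the (uniformly continuous) restriction as $L\to\infty$. With that correction the monotone limit $L^\star_{d,L}\downarrow L^\star_d$ holds and the rest of your proof goes through.
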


By Theorem~\ref{th:allpast}, Algorithm~\ref{alg:meta-algo} satisfies the assumption of Theorem~\ref{th:Lstar}. Our deterministic strategy is thus asymptotically optimal for any stationary bounded ergodic process satisfying the assumptions of Theorem~\ref{th:Lstar}. Here we only give the main ideas in the proof of Theorem~\ref{th:Lstar}. The complete argument is given in 
\ifthenelse{\equal{\Version}{long}}{Appendix~\ref{sec:proof-Lstar}}{an extended version of this article~\cite{GaillardBaudin2014}}. 

\begin{proof}[sketch for Theorem~\ref{th:Lstar}]
The proof follows from the one of Theorem~1 in \cite{GyoerfiLugosiFargas2001}. The new ingredient of our proof is mainly Lemma~\ref{lem:approxBorel}, which states that the best constant Lipschitz strategy performs as well as the best constant Borel strategy. First, because of Breiman's generalized ergodic theorem (see \cite{Breiman1957}) the right-term converges, and by making $L \to \infty$, we get \vspace*{-0.5em}
\[
	 \limsup_{T \to \infty} \left( \frac{1}{T}  \sum_{t=1}^T \ell \bigl( \hat Y_t,Y_t \bigr) \right)\leq \inf_{f \in \mathcal{L}^d}  \E \! \left[ \ell\bigl(f(Y_{-d}^{-1}),Y_0\bigr) \right] \,,
\]
where $\mathcal{L}^d$ is the set of Lipschitz functions from $\R^d$ to  $\R$. Then, by Lemma~\ref{lem:approxBorel} the infimum over all Lipschitz functions equals the infimum over the set $\mathcal{B}^d$ of Borel functions. Therefore, by exhibiting a well-chosen Borel function (see \cite[Theorem~8]{Algoet1994}), this yields
\begin{align*}
\limsup_{T \to \infty} \frac{1}{T}  \sum_{t=1}^T \ell \bigl( \hat Y_t,Y_t \bigr) 
	& \leq \inf_{f \in \mathcal{B}^d}  \E \Bigl[ \ell\bigl(f(Y_{-d}^{-1}),Y_0\bigr) \Bigr]  \\
	& = \E \biggl[\inf_{f \in \mathcal{B}^d}  \E \Bigl[ \ell\bigl(f(Y_{-d}^{-1} ),Y_0\bigr) \Big| Y_{-d}^{-1} \Bigr] \biggr].
\end{align*}
The proof is then concluded by making $d\to \infty$ thanks to the martingale convergence theorem.
\qed \end{proof}

\begin{lemma} 
\label{lem:approxBorel}
Let $\mathcal{X}$ be a convex and compact subset of a normed space. Let $\ell: [0,1]^2 \to [0,1]$ be a loss function $M$-Lipschitz in its first argument. Let $X$ be a random variable on $\mathcal{X}$ with a regular law $\mathbb{P}_{X}$  and let $Y$ be a random variable on $[0,1]$. Then,
\[
	\inf_{f \in \mathcal{L}^{\mathcal{X}}}  \E \! \left[ \ell\bigl(f(X),Y\bigr) \right]   = \inf_{f \in \mathcal{B}^{\mathcal{X}}}  \E \! \left[ \ell\bigl(f(X),Y\bigr) \right]  \,,
\]
where $\mathcal{L}^{\mathcal{X}}$ denotes the set of Lipschitz functions from $\mathcal{X}$ to $\R$ and $\mathcal{B}^{\mathcal{X}}$ the one of Borel functions from $\mathcal{X}$ to $\R$.
\end{lemma}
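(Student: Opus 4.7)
\emph{Proof proposal for Lemma~\ref{lem:approxBorel}.} The inclusion $\mathcal{L}^{\mathcal{X}} \subset \mathcal{B}^{\mathcal{X}}$ gives $\inf_{\mathcal{L}^{\mathcal{X}}} \E[\ell(f(X),Y)] \geq \inf_{\mathcal{B}^{\mathcal{X}}} \E[\ell(f(X),Y)]$ for free, so the task is the reverse inequality. Clipping the values to $[0,1]$ preserves the Lipschitz constant and cannot increase the loss, so one may restrict attention to candidates with range in $[0,1]$. The plan is to show that for every Borel $f:\mathcal{X} \to [0,1]$ and every $\epsilon >0$ there exists a Lipschitz $\tilde f:\mathcal{X} \to [0,1]$ with $\E|\tilde f(X) - f(X)| \leq \epsilon$; the $M$-Lipschitz property of $\ell$ in its first argument then yields $\E[\ell(\tilde f(X),Y)] \leq \E[\ell(f(X),Y)] + M\epsilon$, and letting $\epsilon \to 0$ finishes the argument.

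\smallskip
\noindent
I would construct $\tilde f$ by two classical approximation steps. First, by Lusin's theorem---applicable precisely because $\mathbb{P}_X$ is regular in the sense defined in the paper (inner approximation by compacts, outer approximation by opens)---there is a compact set $K \subset \mathcal{X}$ with $\mathbb{P}_X(\mathcal{X} \setminus K) \leq \epsilon/2$ on which the restriction $f|_K$ is continuous. Tietze's extension theorem then produces a continuous extension $g:\mathcal{X} \to [0,1]$ of $f|_K$ (truncating to preserve the range if necessary). Second, since $\mathcal{X}$ is compact, $g$ is uniformly continuous, and the inf-convolution
\[
    g_n(x) \;=\; \inf_{y\in\mathcal{X}} \bigl\{ g(y) + n\,\|x-y\|\bigr\}
\]
is $n$-Lipschitz on $\mathcal{X}$ and converges uniformly to $g$ as $n \to \infty$. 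Choosing $n$ so that $\|g_n-g\|_\infty \leq \epsilon/2$ and setting $\tilde f = g_n$ (clipped to $[0,1]$, which preserves the Lipschitz constant), one obtains
\[
    \E|\tilde f(X) - f(X)| \;\leq\; \|\tilde f - g\|_\infty + \mathbb{P}_X(\mathcal{X}\setminus K) \;\leq\; \epsilon,
\]
where we used $f=g$ on $K$ and $|\tilde f - f| \leq 1$ off $K$.

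\smallskip
\noindent
The only real point of care---hence the main ``obstacle''---is invoking Lusin's theorem under precisely the regularity assumption stated in the paper, which is slightly weaker than the usual ``Radon measure on a locally compact Hausdorff space'' formulation. The standard proof of Lusin (approximate $f$ by simple functions, apply inner regularity to each preimage, then glue continuously using an Urysohn-type argument on the compact space $\mathcal{X}$) uses exactly the stated regularity and nothing more, so the step is legitimate. Tietze extension on the compact metric space $\mathcal{X}$ and the inf-convolution regularisation are entirely standard and introduce no new ingredient.
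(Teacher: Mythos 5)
Your proposal is correct, and it follows the same two-step skeleton as the paper's proof: pass from Borel to continuous predictors via Lusin's theorem (this is exactly where the regularity of $\P_X$ enters), then from continuous to Lipschitz predictors by uniform approximation on the compact set $\mathcal{X}$, with the $M$-Lipschitzness of $\ell$ in its first argument converting closeness of predictors into closeness of expected losses. The differences lie only in which classical tools you plug into each step. For the continuous-to-Lipschitz step the paper invokes the Stone--Weierstrass-type fact that Lipschitz functions are uniformly dense in the continuous functions on a compact metric space (citing Georganopoulos) together with dominated convergence, whereas you use the explicit inf-convolution $g_n(x)=\inf_{y}\{g(y)+n\|x-y\|\}$; your route is more self-contained, the uniform convergence follows from the uniform continuity of $g$ on the compact $\mathcal{X}$ as you say, and in fact $g_n$ already takes values in $[0,1]$ so the final clipping is not even needed. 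For the Borel-to-continuous step the paper states a version of Lusin's theorem (its Theorem~\ref{prop:lusin}) giving directly a continuous $g$ with $\P_X\{|f-g|\geq\delta\}\leq\epsilon$, whereas you use the classical form (continuity of $f$ restricted to a compact $K$ of large measure) followed by Tietze extension---which is essentially how the paper's version is derived, so you have re-proved that auxiliary statement in passing. Your bookkeeping (a single bound $\E|\tilde f(X)-f(X)|\leq\epsilon$ yielding a loss gap of $M\epsilon$) is equivalent to the paper's split $\Delta\leq\epsilon+M\delta$; both vanish in the limit. One cosmetic remark: since $\ell$ is only defined on $[0,1]^2$, the infima are implicitly over functions valued in $[0,1]$, so your preliminary clipping observation is harmless but automatic.
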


The proof of Lemma~\ref{lem:approxBorel} 
\ifthenelse{\equal{\Version}{long}}{postponed to Appendix~\ref{sec:proof-approxBorel}}{deferred to an extended version of this article~\cite{GaillardBaudin2014}} as well. It follows from the Stone-Weierstrass theorem, used to approximate continuous functions, and from Lusin's theorem, to approximate Borel functions.

\medskip \noindent
{\bf Computational efficiency.} The space complexity of Algorithm~\ref{alg:meta-algo} depends on the chosen sequence of starting times $(t_d)$. It can be arbitrary close to the space complexity of the nested EG strategy, which is $O\big(T^{d/(d+2)}\big)$. Previous algorithms of \cite{GyoerfiLugosiFargas2001,GyorfiOttucsak2007,BiauBleakleyGyorfiOttucsak2010,BiauPatra2011} exhibit consistent strategies as well. However, in practice, these algorithms involve choices of parameters somewhere in their design (by choosing the a priori weight of the infinite set of experts).  Then, the consideration of an infinite set of experts makes the exact algorithm computationally inefficient. For practical purpose, it needs to be approximated. This can be obtained by MCMC or for instance by restricting the set of experts to some finite subset at the cost, however, of loosing theoretical guarantees, see \cite{BiauPatra2011}. 

\medskip \noindent
{\bf Generic loss function.} Theorem~\ref{th:Lstar} assumes $\ell$ to be  bounded, convex, and $M$-Lipschitz in its first argument. In contrast, the results of \cite{GyoerfiLugosiFargas2001,GyorfiOttucsak2007,BiauBleakleyGyorfiOttucsak2010} only hold for the square loss (while  \cite{BiauPatra2011} extend them to the pinball-loss).

\begingroup
\let\clearpage\relax
\bibliographystyle{alpha}
\bibliography{incertitudes.bib}
\endgroup

\ifthenelse{\equal{\Version}{long}}{\newpage \appendix 
\begin{center}
{\Large
Additional Material for\\
\vspace{0.2\baselineskip}
``\ourtitle''}
\end{center}

\bigskip
\noindent
We gather in this appendix the proofs, which were
omitted from the main body of the paper

\section{Proof of Lemma~\ref{lem:bin}}
\label{sec:proof-bin}

It suffices to prove  that for all $h \geq 0$, for all indexes $i \in \{1,\dots,2^h\}$ and all coordinates $j \in \{1,\dots,d\}$, the ranges $\delta_j^{(h,i)} \triangleq \max_{\b x, \b x'\in \mathcal{X}^{(h,i)}} \bigl| x_j - x_j'\bigr|$ satisfies
		\begin{equation}
			\label{eq:induction}
			\delta_j^{(h,i)}   =  
				\left\{ \begin{array}{ll} 2^{-(k+1)} & \mbox{if } j \leq r \\ 2^{-k} & \mbox{otherwise} \end{array}\right. \,, 
		\end{equation}
	where $h = kd +r$ is the decomposition with $r \in \{0,\dots,d-1\}$.  Indeed, we then have
	\[
		\diam \! \left( \mathcal{X}^{(h,i)} \right) = \max_{\b x, \b x'\in \mathcal{X}^{(h,i)}}  {\left\|\b x-\b x'\right\|}_2 
			\leq \sqrt{\sum_{j=1}^d \Bigl(\delta_j^{(h,i)} \Bigr)^2} \,.
	\]
But by~\eqref{eq:induction}, for $r$ coordinates $j \in \{1,\dots,r\}$ among the $d$ coordinates $\delta_j^{(h,i)}$ equals $2^{-(k+1)}$ while the $d-r$ remaining coordinates $j \in \{r+1,\dots,d\}$ satisfy $\delta_j^{(h,i)} = 2^{-k}$. Thus, by routine calculations
\begin{align*}
	\diam\! \left( \mathcal{X}^{(h,i)}\right)
	& \leq \sqrt{ r \left( 2^{-(k+1)}\right)^2 + (d-r) \left( 2^{-k} \right)^2}  \\
	& = 2^{-k} \sqrt{\frac{r}{4} + d-r}  \\
	& = \sqrt{d} 2^{-k} \sqrt{1 - \frac{3r}{4d} }  \\
	& = 	 \sqrt{d} \left(2^{1/d}\right)^{-(dk+r)}  2^{r/d} \sqrt{1 - \frac{3r }{4d}}  
\end{align*}
But, 
\[
	 2^{r/d} \sqrt{1 - \frac{3r }{4d}} \leq \max_{0 \leq u \leq 1} \left\{ 2^{u} \sqrt{1 - \frac{3u }{4}} \right\}   \approx 1.12 \leq \sqrt{2} \,.
\]
The proof is concluded by substituting in the previous bound.
	
\medskip
Now, we prove~\eqref{eq:induction} by induction on the depth $h$. This is true for $h=0$ as the bin of the root node $\mathcal{X}^{(0,1)}$ equals $[0,1]^d$ by definition. 
Besides, let $h\geq 0$ and $i \in \{1,\dots,2^h \}$. We compute the decomposition $h=kd+r$ with $r \in \{0,\dots,d-1\}$. We have by step 5.4 of Algorithm~\ref{alg:NestedEG} that the range of each coordinate $j \neq r+1$ of the bin of the child node $(h+1,2i)$ remains the same 
\begin{equation} 
	\label{eq:range1}
	 \delta_j^{(h+1,2i)} = \delta_j^{(h,i)} =  
				\left\{ \begin{array}{ll} 2^{-(k+1)} & \mbox{if } j \leq r \\ 2^{-k} & \mbox{if } j \geq r+2 \end{array}\right. \,, 
\end{equation}
and the range of coordinate $r+1$ is divided by 2,
\begin{equation}
	\label{eq:range2}
	 \delta_{r+1}^{(h+1,2i)} = \delta_{r+1}^{(h,i)}/2 =  2^{-(k+1)} \,.
\end{equation}
Equations~\eqref{eq:range1} and~\eqref{eq:range2} are also true for the second child $(h+1,2i-1)$, and this concludes the induction.

\section{Lemma~\ref{lem:tree} and its proof}
\label{sec:proof-tree}

\begin{lemma}
	\label{lem:tree}
	Let $N \geq 1$ be an odd integer. Let $\mathcal{T}$ be a binary tree with $N$ nodes. Then,
	\begin{itemize}
		\item its number of inner-nodes equals $N^{\mathrm{in}} = (N-1)/2$. 
		\item the average depth (i.e., distance to the root) of its inner nodes is lower-bounded as
			\[
				\frac{1}{N^{\mathrm{in}}} \sum_{h=0}^{\infty} h \  \# \{\mbox{inner nodes in $\mathcal{T}$ of depth }h \} \geq  \log_2 \! \left( \frac{N-1}{8} \right)\,.
			\]
	\end{itemize}
\end{lemma}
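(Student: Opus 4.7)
The plan is to prove the two assertions of Lemma~\ref{lem:tree} one after the other.

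For the first assertion I would use an edge-counting argument. Every non-root node of $\mathcal{T}$ has a unique parent edge, so $\mathcal{T}$ has exactly $N-1$ edges. On the other hand, every inner node has exactly two children, so the number of edges also equals $2 N^{\mathrm{in}}$. Equating yields $N^{\mathrm{in}} = (N-1)/2$ (a nonnegative integer precisely because $N$ is odd), and consequently the number of leaves is $L = N - N^{\mathrm{in}} = (N+1)/2$.

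For the second assertion I would \emph{not} attack the internal path length $\mathrm{IPL} = \sum_h h\, n_h$ directly by optimising over the constraints $n_h \leq 2^h$ and $\sum_h n_h = N^{\mathrm{in}}$: that route leads to a brittle piecewise computation. Instead, the idea is to route the bound through the external path length $\mathrm{EPL} = \sum_{v\text{ leaf}} d(v)$ via two classical ingredients. \textbf{(Path-length identity.)} Summing the depths of all non-root nodes in two different ways gives $\mathrm{EPL} + \mathrm{IPL}$ on the one hand, and on the other $\sum_{v\text{ inner}} 2\bigl(d(v)+1\bigr) = 2\,\mathrm{IPL} + 2 N^{\mathrm{in}}$, since every non-root node is one of the two children of a unique inner node; hence $\mathrm{EPL} = \mathrm{IPL} + 2 N^{\mathrm{in}}$. \textbf{(Kraft–Jensen lower bound.)} Kraft's equality $\sum_{v\text{ leaf}} 2^{-d(v)} = 1$ holds by an easy induction on the tree, so $p_v = 2^{-d(v)}$ defines a probability distribution on the $L$ leaves; the Gibbs inequality applied to the uniform distribution $q_v = 1/L$ then yields $\mathrm{EPL}/L = \sum_v q_v d(v) \geq H(q) = \log_2 L$, i.e.\ $\mathrm{EPL} \geq L \log_2 L$.

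Combining the two ingredients, and writing $m = N^{\mathrm{in}} = (N-1)/2$ and $L = m+1$,
\[
\mathrm{IPL} \;=\; \mathrm{EPL} - 2 N^{\mathrm{in}} \;\geq\; (m+1)\log_2(m+1) - 2m \;\geq\; m \log_2 m - 2m \;=\; m \log_2(m/4),
\]
and dividing by $m$ gives exactly the claimed bound $\mathrm{IPL}/N^{\mathrm{in}} \geq \log_2((N-1)/8)$. The only real obstacle is conceptual: the inner-node depth profile $(n_h)$ is awkward to control directly, whereas the leaf depths satisfy the rigid Kraft constraint and the two path lengths are linked by a clean identity. Once one spots that the detour through the leaves is the right move, every step collapses to a one-liner.
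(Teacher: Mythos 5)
Your proof is correct, but it takes a genuinely different route from the paper's on both counts. For the first assertion the paper proceeds by induction on $N$, contracting an inner node whose two children are leaves and applying the induction hypothesis to the resulting tree with $N-2$ nodes; your edge-counting argument ($N-1$ edges on one hand, $2N^{\mathrm{in}}$ on the other, since every inner node of this full binary tree has exactly two children) reaches $N^{\mathrm{in}}=(N-1)/2$ in one line. For the second assertion the paper works directly with the inner-node depth profile: it asserts that the average inner-node depth is minimized by the ``equilibrated'' tree having $2^h$ inner nodes at each depth $h\le\lfloor\log_2 N^{\mathrm{in}}\rfloor$, and then evaluates $\sum_{h} h2^h$ in closed form. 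That extremality claim is stated without justification in the paper, whereas your detour through the leaves replaces it by two fully checkable facts: the path-length identity $\mathrm{EPL}=\mathrm{IPL}+2N^{\mathrm{in}}$ (obtained by summing the depths of the non-root nodes in two ways) and the Kraft--Gibbs bound $\mathrm{EPL}\ge L\log_2 L$. Your chain $\mathrm{IPL}\ge(m+1)\log_2(m+1)-2m\ge m\log_2(m/4)$ with $m=N^{\mathrm{in}}$ then yields exactly the stated constant $\log_2\bigl((N-1)/8\bigr)$ (and is in fact marginally sharper before the last relaxation). In short, your argument is more self-contained and arguably more rigorous, at the cost of introducing the external path length; the paper's is shorter to state but leans on an unproved optimality claim about balanced trees. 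Both proofs implicitly require $N\ge 3$ so that $N^{\mathrm{in}}\ge 1$, which is the only regime in which the average in the lemma is defined.
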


\begin{proof}
{\emph{First statement}.} We proceed by induction. If $N = 1$, there is only one binary tree with one node, the lone leaf,  so that $N^{\mathrm{in}} = 0$. Now, if $\mathcal{T}$ is a binary tree with $N\geq 3$ nodes, select an inner node $n$ which is parent of two leaf nodes. Then,  replaces the subtree rooted at $n$ by a leaf node. The resulting subtree $\mathcal{T}'$ of $\mathcal{T}$ has $N-2$ nodes, so that by induction hypothesis $\mathcal{T}'$ has $(N-3)/2 $ inner nodes. But, $\mathcal{T}'$ has also $N^{\mathrm{in}} -1$ inner nodes. Therefore $N^{\mathrm{in}}  = (N-1)/2$. 

\medskip \noindent
{\emph{Second statement}.} We note that the average depth is minimized for the equilibrated binary trees, that are such that 
\begin{itemize}
	\item all depths $h \in \{0, \dots, \lfloor \log_2 N^{\mathrm{in}} \rfloor\}$ have exactly $2^h$ inner nodes;
	\item no inner nodes has depth $h > \lceil \log_2 N^{\mathrm{in}} \rceil$.
\end{itemize}
Therefore,
\[
	 \frac{1}{N^{\mathrm{in}}} \sum_{h=0}^{\infty} h \  \# \{\mbox{inner nodes in $\mathcal{T}$ of depth }h \} \geq \frac{1}{N^{\mathrm{in}}} \sum_{h=0}^{ \lfloor log_2 N^{\mathrm{in}} \rfloor} h 2^h
\]
Now, we use that $\sum_{i=0}^{n-1} i2^i = 2^{n}(n-2) +2$ for all $n \geq 1$, which implies because $\lfloor \log_2 N^{\mathrm{in}} \rfloor \geq \log_2 N^{\mathrm{in}}  - 1$ and by substituting in the previous bound,
\[
	\frac{1}{N^{\mathrm{in}}} \sum_{h=0}^{\infty} h \  \# \{\mbox{inner nodes in $\mathcal{T}$ of depth }h \} \geq  \underbrace{\frac{2^{ \log_2 N^{\mathrm{in}} }}{N^{\mathrm{in}}}}_{=1} \left( \log_2 N^{\mathrm{in}} -2 \right) + \underbrace{\frac{2}{N^{\mathrm{in}}} }_{\geq 0}\,.
\]
This concludes the proof by substituting $N^{\mathrm{in}} = (N-1)/2$.
\qed \end{proof}

\section{Proof of Lemma~\ref{lem:meta-algo}}
\label{sec:proof-meta-algo}
The proof follows from a simple adaptation of the proof of the regret bound of the exponentially weighted average forecaster---see for instance \cite{Cesa-BianchiLugosi2006}. By convexity of $\ell$ and by Hoeffding's inequality, we have at each time step $t$
\begin{align*}
	\ell(\hat y_t,y_t) \leq \sum_{d=1}^{D_t} \hat p_{d,t} \ell(f_{d,t},y_t) 
	& \leq -\frac{1}{\eta_{t}} \log \sum_{d=1}^{D_t} \hat p_{d,t} e^{-\eta_{t} \ell(f_{d,t},y_t)} + \frac{\eta_{t}}{8} 
\end{align*}
By Jensen's inequality, since $\eta_{t+1} \leq \eta_{t}$ and thus $x \mapsto x^{\eta_{t}/\eta_{t+1}}$ is convex
\begin{align*}
	\frac{1}{D_T} \sum_{d=1}^{D_t} \hat p_{d,t} e^{-\eta_{t} \ell(f_{d,t},y_t)} 
	& = \frac{1}{D_T} \sum_{d=1}^{D_t} \left(\hat p_{d,t}^{\frac{\eta_{t+1}}{\eta_t}} e^{-\eta_{t+1} \ell(f_{d,t},y_t)} \right)^{\frac{\eta_t}{\eta_{t+1}}} \\
	& \geq \left(\frac{1}{D_t} \sum_{d=1}^{D_t} \hat p_{d,t}^{\frac{\eta_{t+1}}{\eta_t}} e^{-\eta_{t+1} \ell(f_{d,t},y_t)} \right)^{\frac{\eta_t}{\eta_{t+1}}}
\end{align*}
Substituting in Hoeffding's bound we get
\begin{align*}
	\ell(\hat y_t,y_t) & \leq \left(\frac{1}{\eta_{t+1}}-\frac{1}{\eta_{t}}\right) \log {D_{t}} -\frac{1}{\eta_{t+1}} \log \! \left(  \sum_{d=1}^{D_t} \hat p_{d,t}^{\frac{\eta_{t+1}}{\eta_t}} e^{-\eta_{t+1} \ell(f_{d,t},y_t)} \right) + \frac{\eta_t}{8} 
\end{align*}
Now, by definition of the loss update in step 3 of Algorithm~\ref{alg:meta-algo}, for all $d = 1,\dots, D_t$
\begin{align*}
	\sum_{k=1}^{D_t} \hat p_{k,t}^{\frac{\eta_{t+1}}{\eta_t}} e^{-\eta_{t+1} \ell(f_{k,t},y_t)} = \frac{D_{t}}{D_{t+1}} \ \frac{\hat p_{d,t}^{\frac{\eta_{t+1}}{\eta_t}} e^{-\eta_{t+1} \ell(f_{d,t},y_t)} }{\hat p_{d,t+1}} 
\end{align*}
which after substitution in the previous bound leads to the inequality
\begin{align*}
	\ell(\hat y_t,y_t)  & \leq  \ell(f_{d,t},y_t) +  \frac{1}{\eta_{t+1}}\log (D_{t+1} \hat p_{d,t+1})  - \frac{1}{\eta_t} \log (D_t \hat p_{d,t}) + \frac{\eta_t}{8} \,.
\end{align*}
By summing over $t=t_d,\dots,T$, the sum telescopes; using that $\hat p_{d,t_d} = 1/D_{t_d}$ by step 3.1.
\[
\sum_{t=t_d}^T \ell(\hat y_t,y_t)  \leq   \sum_{t=t_d}^T \ell(f_{d,t},y_t) +  \frac{1}{\eta_{T+1}}\log (D_{T+1}  \underbrace{\hat p_{d,T+1}}_{\leq 1} )  - \frac{1}{\eta_t} \log ( \underbrace{D_{t_d} \hat p_{d,t_d}}_{= 1}) + \frac{1}{8} \sum_{t=t_d}^T \eta_t \,,
\]
which concludes the proof of the first statement. The second statement of the theorem is because
 \[
 	\frac{1}{2} \sum_{t=1}^T \eta_t = \sum_{t=1}^T \frac{1}{\sqrt{t}} = 1 + \sum_{t=2}^T \frac{1}{\sqrt{t}} \leq 1 + \int_{1}^T \frac{1}{\sqrt{t}} dt \leq  2 \sqrt{T} \,.
 \]

\section{Proof of Lemma~\ref{lem:approxBorel}}
\label{sec:proof-approxBorel}

The proof is performed in two steps. 

\bigskip \noindent
{\bf{Step 1: Lipschitz $\rightarrow$ Continuous}.} First, the Stone-Weierstrass theorem entails that any continuous function $f:\mathcal{X} \to \R$ from a compact metric space $\mathcal{X}$ to $\R$ is the uniform limit of Lipschitz functions, see e.g., ~\cite{Georganopoulos1967}. Thus, the dominated convergence theorem yields 
\[
	\inf_{f \in \mathcal{L}}  \E \Bigl[ \ell\bigl(f(X),Y\bigr) \Bigr]  = \inf_{f \in \mathcal{C}}  \E \Bigl[ \ell\bigl(f(X),Y\bigr) \Bigr] \,,
        \]
where $\mathcal{L}$ denotes the set of Lipschitz functions from $\mathcal{X}$ to $\R$ and $\mathcal{C}$ is the set of continuous functions from $\mathcal{X}$ to $\R$.

\bigskip \noindent
{\bf{Step 2: Continuous $\rightarrow$ Borel.}} Second, by the version of Lusin's theorem stated in Theorem~\ref{prop:lusin}, we can approximate any mesurable function by continuous functions (this is where regularity is used). 

Let $\delta, \epsilon > 0$ and $f: \mathcal{X} \to [0,1]$ be a Borel function. By Theorem~\ref{prop:lusin}, there exists a continuous function $g: \mathcal{X} \to [0,1]$ such that
\[
	\P_{X} \Bigl\{ |f - g| \geq \delta \Bigr\} \leq \epsilon \,.
\]
Then by Jensen's inequality, and since 
\begin{align*}
 	\Delta  & \triangleq \bigg|  \E \Bigl[ \ell\bigl(f(X),Y\bigr)\Big]  -  \E \Bigl[ \ell\bigl(g(X),Y\bigr)\Big]  \biggr| \leq \E \biggl[ \Big| \ell\bigl(f(X),Y\bigr) - \ell\bigl(g(X),Y\bigr) \Big| \biggr] \\
		& \leq \underbrace{\P_{X} \Bigl\{ |f - g| \geq \delta \Bigr\}}_{\leq \epsilon} \ +\  
		\underbrace{\E \Big[ M \big| f(X) - g(X) \big| \ \indic{| f(X) - g(X) | \leq \delta } \Big]}_{\leq M \delta}  \,,
\end{align*}
where the second inequality is because $\ell$ takes values in $[0,1]$ and is $M$-Lipschitz in its first argument. Thus $\Delta \leq \epsilon + M \delta$, which concludes the proof since this is true for arbitrary small values of $\epsilon$ and $\delta$.

\begin{theorem}[Lusin]
	\label{prop:lusin}
	 If $\mathcal{X}$ is a convex and compact subset of a normed space, equipped with  a regular probability mesure $\mu$, then for every measurable function $f: \mathcal{X} \to [0,1]$ and for every $\delta, \epsilon > 0$, there exists a continuous function $g: \mathcal{X} \to [0,1]$ such that
	\[
		\mu \left\{ \bigl| f - g \bigr| \geq \delta \right\} \leq \epsilon \,.
	\]
\end{theorem}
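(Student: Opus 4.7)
The plan is to prove Lusin's theorem through the classical three-step scheme: discretize the range of $f$ into finitely many levels, approximate the Borel level sets from within by compact sets using the regularity hypothesis on $\mu$, and extend the resulting locally constant function by Tietze's extension theorem.

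I would first fix an integer $N$ with $1/N < \delta$, partition $[0,1]$ into the intervals $I_k = [(k-1)/N, k/N)$ for $k=1,\dots,N-1$ together with $I_N = [(N-1)/N, 1]$, and set $A_k = f^{-1}(I_k)$, which forms a finite Borel partition of $\mathcal{X}$. Picking any representative $c_k \in I_k$ and defining the simple function $f_N = \sum_k c_k \mathds{1}_{A_k}$ yields $|f - f_N| < \delta$ pointwise on $\mathcal{X}$. Next, for each $k$ I would apply the regularity of $\mu$ to the Borel set $A_k$ to extract a compact subset $K_k \subset A_k$ with $\mu(A_k \setminus K_k) \leq \epsilon/N$ (keeping only the compact-from-within half of the regularity statement). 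The $K_k$ are pairwise disjoint since the $A_k$ are, and $K := \bigcup_k K_k$ is then a finite union of compact sets, hence compact (in particular closed) in $\mathcal{X}$, and satisfies $\mu(\mathcal{X} \setminus K) \leq \sum_k \mu(A_k \setminus K_k) \leq \epsilon$.

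Because each $K_k$ is closed in $\mathcal{X}$ and its complement inside $K$ is the finite union of the remaining closed sets $K_j$ for $j \neq k$, each $K_k$ is clopen in the subspace topology of $K$; hence $\tilde g := f_N|_K$, which is constantly equal to $c_k$ on $K_k$, is continuous on $K$ and takes values in $[0,1]$. Since $\mathcal{X}$ is a metric space (therefore normal) and $K \subset \mathcal{X}$ is closed, Tietze's extension theorem in its bounded form produces a continuous $g : \mathcal{X} \to [0,1]$ with $g|_K = \tilde g$. On $K$ one then has $|f - g| = |f - f_N| < \delta$, so $\{|f - g| \geq \delta\} \subset \mathcal{X} \setminus K$ and this set has $\mu$-measure at most $\epsilon$, which is exactly the conclusion.

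The main delicate point will be the continuity of $\tilde g$ on $K$: it depends crucially on $K$ being the union of \emph{pairwise disjoint} compact pieces in a metric space, so that each $K_k$ is clopen in the subspace $K$. Aside from this, the argument only uses the inner-regularity half of the regularity hypothesis together with the bounded-target version of Tietze's theorem, which directly delivers $g$ with values in $[0,1]$ without any ad hoc truncation that could otherwise compromise continuity along the boundary where $g$ would have to be cut off.
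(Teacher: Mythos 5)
Your proof is correct, and it is genuinely self-contained where the paper is not: the paper does not prove Theorem~\ref{prop:lusin} at all, but merely points to the proof of \cite[Proposition~25]{StoltzLugosi2007}. Your argument is the classical one — discretize the range into intervals of length $1/N<\delta$, pull back to a finite Borel partition $(A_k)$, use the inner-regularity half of the paper's regularity hypothesis to get disjoint compact sets $K_k\subset A_k$ with $\mu(A_k\setminus K_k)\le\epsilon/N$, and extend the locally constant function from $K=\bigcup_k K_k$ by Tietze. The delicate point you flag is handled correctly: $K\setminus K_k=\bigcup_{j\neq k}K_j$ is compact, hence closed in $K$, so each $K_k$ is clopen in $K$ and $\tilde g$ is continuous there; Tietze applies because $K$ is compact, hence closed, in the metric space $\mathcal{X}$, and its bounded form keeps the range in $[0,1]$ with no truncation. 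One instructive difference from the cited route: your proof never uses the \emph{convexity} of $\mathcal{X}$, which appears in the statement only because in \cite{StoltzLugosi2007} the approximated maps take values in $\mathcal{X}$ itself, so the extension step (Dugundji rather than Tietze) needs a convex target; for $[0,1]$-valued functions, as here, convexity of the domain is superfluous and your argument shows the theorem holds on any compact metric space with a regular Borel probability measure. What the paper's citation buys is brevity; what yours buys is a complete proof and a sharper view of which hypotheses are actually needed.
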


The proof of Theorem~\ref{prop:lusin} can be easily derived from the proof of \cite[Proposition 25]{StoltzLugosi2007}. 

\section{Proof of Theorem~\ref{th:Lstar}}
\label{sec:proof-Lstar}
In this proof, apart from the use of Breiman's generalized ergodic  theorem in the beginning and the martingale convergence theorem in the end (as exhibited in \cite{GyoerfiLugosiFargas2001,GyorfiOttucsak2007,BiauBleakleyGyorfiOttucsak2010,BiauPatra2011}), we resort to new arguments.

Let $d \geq 1$ and $L \geq 0$. Then, by assumption and by exchanging $\limsup$ and $\inf,$
\begin{align*}
	\limsup_{T \to \infty} \frac{1}{T}  \left( \sum_{t=1}^T \ell \bigl( \hat Y_t,Y_t \bigr) \right)
		& \leq  \inf_{f \in \mathcal{L}_{L}^d}\    \limsup_{T \to \infty}  \left( \frac{1}{T} \sum_{t=1}^T \ell\bigl(f(Y_{t-d}^{t-1}),Y_t\bigr) \right) \,.
\end{align*}
Because $\ell$ is bounded over $[0,1]^2$ and thus integrable, Breiman's generalized ergodic theorem (see \cite{Breiman1957}) entails that the right-term converges: almost surely,
\[
  \lim_{T \to \infty}  \left( \frac{1}{T} \sum_{t=1}^T \ell\bigl(f(Y_{t-d}^{t-1}),Y_t\bigr) \right) =  \E \! \left[ \ell\bigl(f(Y_{-d}^{-1}),Y_0\bigr) \right] 
\]
and thus,
\[
	 \limsup_{T \to \infty}  \left( \frac{1}{T}  \sum_{t=1}^T \ell \bigl( \hat Y_t,Y_t \bigr)  \right)
		 \leq \inf_{f \in \mathcal{L}_{L}^d}\  \E\Bigl[ \ell\bigl(f(Y_{-d}^{-1}),Y_0\bigr) \Bigr]  \,.
\]
By letting $L \to \infty$ in the inequality above, we get
\[
	\limsup_{T \to \infty} \left( \frac{1}{T}  \sum_{t=1}^T \ell \bigl( \hat Y_t,Y_t \bigr)  \right)
		 \leq \inf_{f \in \mathcal{L}^d}  \E \Bigl[ \ell\bigl(f(Y_{-d}^{-1}),Y_0\bigr) \Bigr]  \,.
\]
By Lemma~\ref{lem:approxBorel} the infimum over all continuous functions equals the infimum over the set $\mathcal{B}^d$ of Borel functions. Therefore,
\begin{align*}
\limsup_{T \to \infty} \left(  \frac{1}{T}  \sum_{t=1}^T \ell \bigl( \hat Y_t,Y_t \bigr) \right) 
	& \leq \inf_{f \in \mathcal{B}^d}  \E \Bigl[ \ell\bigl(f(Y_{-d}^{-1}),Y_0\bigr) \Bigr] \\
	& \leq \E \biggl[  \underbrace{\inf_{f \in \mathcal{B}^d}  \E \Bigl[ \ell\bigl(f(Y_{-d}^{-1}),Y_0\bigr) \Big| Y_{-d}^{-1} \Bigr]}_{ \triangleq Z_d} \biggr] \,,
\end{align*}
where the second inequality is by the measurable selection theorem---see Theorem~8 in Appendix I of \cite{Algoet1994}. Now, we remark that $\big(Z_d\big)$ is a bounded super-martingale with respect to the family of sigma algebras $\big( \sigma (Y_{-d}^{-1} ) \big)_{d \geq 1}$. Indeed, the function $\inf_{f \in \mathcal{B}^{d+1}}(.)$ is concave, thus conditional Jensen's inequality
\begin{align*}
	\E \bigl[Z_{d+1} \big| Y_{-d}^{-1} \bigr] 
		& \leq \inf_{f \in \mathcal{B}^{d+1}}  \E \biggl[ \E \Bigl[ \ell\Bigl(f\big(Y_{-(d+1)}^{-1}\big),Y_0\Bigr) \Big| Y_{-(d+1)}^{-1} \Bigr] \bigg| Y_{-d}^{-1} \biggr] \\
		& = \inf_{f \in \mathcal{B}^{d+1}} \E \Bigl[ \ell\Bigl(f\big(Y_{-(d+1)}^{-1}\big),Y_0\Bigr) \bigg| Y_{-d}^{-1} \Bigr]
\end{align*}
Now, we note that 
\[	
\inf_{f \in \mathcal{B}^{d+1}}  \E \Bigl[ \ell\Bigl(f\big(Y_{-(d+1)}^{-1}\big),Y_0\Bigr) \Big| Y_{-d}^{-1} \Bigr] 
	\leq \inf_{f' \in \mathcal{B}^{d}}  \E \Bigl[ \ell\Bigl(f'\big(Y_{-d}^{-1}\big),Y_0\Bigr) \Big| Y_{-d}^{-1} \Bigr] = Z_d \,,
\]
which yields $\E \bigl[Z_{d+1} \big| Y_{-d}^{-1} \bigr]  \leq Z_d$. Thus, the martingale convergence theorem (see e.g. \cite{Chow1965}) implies that $Z_d$ converges almost surely and in $\mathbb{L}_1$. Thus,
\[
	\lim_{d \to  \infty} \E \bigl[ Z_d \bigr] =  \E \! \left[  \inf_{f \in \mathcal{B}^{\infty}}  \E \Bigl[ \ell\bigl(f(Y_{-\infty}^{-1}),Y_0\bigr) \Big| Y_{-\infty}^{-1} \Bigr]\right] = L^\star \,,
\]
which yields the stated result $\limsup_T \sum_{t=1}^T \ell \bigl( \hat Y_t,Y_t \bigr) / T = L^\star$.
}{}

\end{document}